\begin{document}
\def\podu{{\sf pd}}   
\def\per{{\sf pm}}      
\def\perr{{\sf q}}        
\def\perdo{{\cal K}}   
\def\sfl{{\mathrm F}} 
\def\sp{{\mathbb S}}  
 
\newcommand\diff[1]{\frac{d #1}{dz}} 
\def\End{{\rm End}}              
\def\hol{{\rm Hol}}
\def\sing{{\rm Sing}}            
\def\spec{{\rm Spec}}            
\def\cha{{\rm char}}             
\def\Gal{{\rm Gal}}              
\def\jacob{{\rm jacob}}          
\def\tjurina{{\rm tjurina}}      
\newcommand\Pn[1]{\mathbb{P}^{#1}}   
\def\Ff{\mathbb{F}}                  
\def\Z{\mathbb{Z}}                   
\def\Gm{\mathbb{G}_m}                 
\def\Q{\mathbb{Q}}                   
\def\C{\mathbb{C}}                   
\def\O{{\cal O}}                     
\def\as{\mathbb{U}}                  
\def\ring{{\mathsf R}}                         
\def\R{\mathbb{R}}                   
\def\N{\mathbb{N}}                   
\def\A{\mathbb{A}}                   
\def\uhp{{\mathbb H}}                
\newcommand\ep[1]{e^{\frac{2\pi i}{#1}}}
\newcommand\HH[2]{H^{#2}(#1)}        
\def\Mat{{\rm Mat}}              
\newcommand{\mat}[4]{
     \begin{pmatrix}
            #1 & #2 \\
            #3 & #4
       \end{pmatrix}
    }                                
\newcommand{\matt}[2]{
     \begin{pmatrix}                 
            #1   \\
            #2
       \end{pmatrix}
    }
\def\ker{{\rm ker}}              
\def\cl{{\rm cl}}                
\def\dR{{\rm dR}}                

\def\hc{{\mathsf H}}                 
\def\Hb{{\cal H}}                    
\def\GL{{\rm GL}}                
\def\pese{{\sf P}}                  
\def\pedo{{\cal  P}}                  
\def\PP{\tilde{\cal P}}              
\def\cm {{\cal C}}                   
\def\K{{\mathbb K}}                  
\def\k{{\mathsf k}}                  
\def\F{{\cal F}}                     
\def\M{{\cal M}}
\def\RR{{\cal R}}
\newcommand\Hi[1]{\mathbb{P}^{#1}_\infty}
\def\pt{\mathbb{C}[t]}               
\def\W{{\cal W}}                     
\def\gr{{\rm Gr}}                
\def\Im{{\rm Im}}                
\def\Re{{\rm Re}}                
\def\depth{{\rm depth}}
\newcommand\SL[2]{{\rm SL}(#1, #2)}    
\newcommand\PSL[2]{{\rm PSL}(#1, #2)}  
\def\Resi{{\rm Resi}}              

\def\L{{\cal L}}                     
\def\Aut{{\rm Aut}}              
\def\any{R}                          
\newcommand\ovl[1]{\overline{#1}}    

\def\T{{\cal T }}                    
\def\tr{{\mathsf t}}                 
\newcommand\mf[2]{{M}^{#1}_{#2}}     
\newcommand\bn[2]{\binom{#1}{#2}}    
\def\ja{{\rm j}}                 
\def\Sc{\mathsf{S}}                  
\newcommand\es[1]{g_{#1}}            
\newcommand\V{{\mathsf V}}           
\newcommand\WW{{\mathsf W}}          
\newcommand\Ss{{\cal O}}             
\def\rank{{\rm rank}}                
\def\Dif{{\cal D}}                   
\def\gcd{{\rm gcd}}                  
\def\zedi{{\rm ZD}}                  
\def\BM{{\mathsf H}}                 
\def\plf{{\sf pl}}                             
\def\sgn{{\rm sgn}}                      
\def\diag{{\rm diag}}                   
\def\hodge{{\rm Hodge}}
\def\HF{{\sf F}}                                
\def\WF{{\sf W}}                               
\def\HV{{\sf HV}}                                
\def\pol{{\rm pole}}                               
\def\bafi{{\sf r}}
\def\codim{{\rm codim}}                               
\def\id{{\rm id}}                               
\def\gms{{\sf M}}                           
\def\Iso{{\rm Iso}}                           
\newtheorem{theo}{Theorem}
\newtheorem{exam}{Example}
\newtheorem{coro}{Corollary}
\newtheorem{defi}{Definition}
\newtheorem{prob}{Problem}
\newtheorem{lemm}{Lemma}
\newtheorem{prop}{Proposition}
\newtheorem{rem}{Remark}
\newtheorem{conj}{Conjecture}
\newtheorem{calc}{}

\begin{center}
{\LARGE\bf  Eisenstein  type series for Calabi-Yau varieties 
\footnote{ 
Math. classification: 14N35, 
14J15, 32G20
\\
Keywords: Gauss-Manin connection, Yukawa coupling, Hodge filtration, Griffiths transversality. 
}
}
\\
\vspace{.25in} {\large {\sc Hossein Movasati}} \\
Instituto de Matem\'atica Pura e Aplicada, IMPA, \\
Estrada Dona Castorina, 110,\\
22460-320, Rio de Janeiro, RJ, Brazil, \\
{\tt www.impa.br/$\sim$ hossein, hossein@impa.br} 
\end{center}
\begin{abstract}
In this article we introduce an ordinary differential equation associated to the one parameter family of Calabi-Yau varieties which is mirror dual to the universal family of 
smooth quintic three folds. It is satisfied by seven functions written in the $q$-expansion form and the Yukawa coupling turns out to be rational in these functions. We prove 
that these  functions are algebraically independent over the field of complex numbers, and hence, the algebra generated by such functions can be interpreted as 
the theory of quasi-modular forms attached to the one parameter family of Calabi-Yau varieties. 
Our result is a reformulation and realization of a problem of Griffiths around seventies on the existence of automorphic functions for the moduli of polarized Hodge structures. 
It is a generalization of the Ramanujan differential equation satisfied by three Eisenstein series.   
\end{abstract}
\section{Introduction}
Modular and quasi modular forms as generating functions count very unexpected objects beyond the scope of 
analytic number theory. There are many examples for supporting this fact. The Shimura-Taniyama conjecture, 
now the modularity theorem, states that the generating function for 
counting $\mathbb F_p$-rational points of an elliptic curve over $\Z$ for different primes $p$, is essentially a modular form. Monstrous 
moonshine conjecture, now Borcherds theorem, relates the coefficients of the $j$-function with the representation dimensions 
of the monster group. Counting ramified coverings of an elliptic curve with a fixed ramification data leads us to quasi modular 
forms. 

In the context of Algebraic Geometry, the theory of modular forms is attached to elliptic curves and in a 
similar way the theory of Siegel and Hilbert modular forms is attached to polarized abelian varieties. A naive mind may dream of other 
modular form 
theories attached to other varieties of a fixed topological type. An attempt to formulate such theories was first done around 
seventies by P. Griffiths  in the framework of Hodge structures, see \cite{gr70}. However, such a formulation leads us to the notion of 
automorphic cohomology which has  lost the generating function role of modular forms. Extending the algebra of any type of 
modular forms into an algebra of quasi modular forms, which is closed under canonical derivations, seems to be indispensable for 
further generalizations.   

In 1991 there appeared the article of Candelas, de la Ossa, Green and Parker, in which they  calculated in the framework of mirror symmetry 
a generating function, called the Yukawa coupling,  which predicts the number of rational curves of a fixed degree in a generic quintic 
three fold. 
From mathematical point of view, 
the finiteness is still a conjecture carrying the name of Clemens. Since then there was some effort to express the Yukawa 
coupling in terms of classical modular or quasi modular forms, however, there was no success. The Yukawa coupling is 
calculated from the periods of a one parameter family of Calabi-Yau varieties and this suggests that there must be 
a theory of quasi  modular forms attached to this family.  The main aim of the present text is to realize the construction of 
such a theory.  
     
Consider the following ordinary differential equation in seven variables $t_0,t_1,\ldots,t_4,t_5,t_6$:
\begin{equation}
\label{lovely}
\left \{ \begin{array}{l}
\dot t_0=\frac{1}{t_5}(\frac{6}{5}t_0^5+\frac{1}{3125}t_0t_3-\frac{1}{5}t_4)
\\
\dot t_1=\frac{1}{t_5}
(-125t_0^6+t_0^4t_1+125t_0t_4+\frac{1}{3125}t_1t_3) 
\\
\dot t_2=\frac{1}{t_5}(-1875t_0^7-\frac{1}{5}t_0^5t_1+2t_0^4t_2+1875t_0^2t_4+\frac{1}{5}t_1t_4+\frac{2}{3125}t_2t_3) 
\\
\dot t_3=\frac{1}{t_5}
(-3125t_0^8-\frac{1}{5}t_0^5t_2+3t_0^4t_3+3125t_0^3t_4+\frac{1}{5}t_2t_4+
\frac{3}{3125}t_3^2)
\\
\dot t_4=\frac{1}{t_5}
(5t_0^4t_4+\frac{1}{625}t_3t_4)
\\
\dot t_5=\frac{t_6}{t_5}
\\
\dot t_6=(-\frac{72}{5}t_0^8-\frac{24}{3125}t_0^4t_3-\frac{3}{5}t_0^3t_4-\frac{2}{1953125}t_3^2)+
\frac{t_6}{t_5}(12t_0^4+\frac{2}{625}t_3)
\end{array} \right.,
\end{equation}
where 
$$
\dot t=5q\frac{\partial t}{\partial q}.
$$
We write each $t_i$ as a formal power series in $q$, $t_i=\sum_{n=0}^\infty t_{i,n}q^n$ and substitute in the above differential equation and we see that it determines all the coefficients $t_{i,n}$ uniquely with the initial values:
\begin{equation}
 \label{22july2010}
t_{0,0}=\frac{1}{5}, \ t_{0,1}=24,\  t_{4,0}=0
\end{equation}
and assuming that $t_{5,0}\not =0$. After substitution we get the two possibilities  $0, \frac{-1}{3125}$ for  $t_{5,0}$, and $t_{i,n},\ n\geq 2$  is given in terms of  $t_{j,m}, \ \ j=0,1,\ldots,6,\ \  m<n$. See \S \ref{conjecture} for the first eleven coeffiecients of $t_i$'s. 
We calculate  the expression $\frac{-(t_4-t_0^5)^2}{625t_5^3}$ and write it in  Lambert series form. It turns out that 
$$
\frac{-(t_4-t_0^5)^2}{625t_5^3}=5+2875 \frac{q}{1-q}+ 609250\cdot 2^2\frac{q^2}{1-q^2}+
\cdots+ n_d d^3\frac{q^d}{1-q^d}+\cdots.
$$
Let $W_{\psi}$ be the variety  obtained by the resolution of singularities of the following quotient:
\begin{equation}
 \label{shahva}
W_\psi:=\{[x_0:x_1:x_2:x_3:x_4]\in \mathbb P ^ 4\mid x_0^5+  x_1^5+ x_2^5+ x_3^5+ x_4^5-5\psi x_0x_1x_2x_3x_4=0\}/G, 
\end{equation}
where $G$ is the group 
$$
G:=\{(\zeta_1,\zeta_2,\cdots,\zeta_5)\mid  \zeta_i^5=1, \ \zeta_1\zeta_2\zeta_3\zeta_4\zeta_5=1 \}
$$  
acting in a canonical way. The family $W_\psi$ is Calabi-Yau and it is mirror dual to the universal family of quintic varieties in $\Pn 4$. 
\begin{theo}
\label{29.03.10}
The quantity $\frac{-(t_4-t_0^5)^2}{625t_5^3}$ is the Yukawa coupling associated to the 
family of Calabi-Yau varieties $W_\psi$.
\end{theo}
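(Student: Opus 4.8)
The plan is to compare the geometrically defined Yukawa coupling of the family $W_\psi$ with the rational expression in the $t_i$ by realizing both through one common period frame. Recall that $H^3_{\dR}(W_\psi)$ carries the Gauss--Manin connection $\nabla$ and the cup-product polarization $\langle\cdot,\cdot\rangle$, and that the mirror quintic has Hodge numbers $(1,1,1,1)$, so $H^3_{\dR}$ is four dimensional. Fixing the holomorphic $3$-form $\omega_0$ and writing $\theta:=5q\,\partial_q$ for the derivation appearing in (\ref{lovely}), the Yukawa coupling is, up to the standard period normalization, the triple cup product $\langle\omega_0,\nabla_\theta^3\omega_0\rangle$. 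The first step is to record this definition together with the orthogonality relations forced by Griffiths transversality: since $\langle F^p,F^q\rangle=0$ for $p+q>3$ and $\nabla_\theta F^p\subset F^{p-1}$, one gets $\langle\omega_0,\omega_0\rangle=\langle\omega_0,\nabla_\theta\omega_0\rangle=\langle\omega_0,\nabla_\theta^2\omega_0\rangle=0$, so the coupling collapses to the single generically nonzero bracket $\langle\omega_0,\nabla_\theta^3\omega_0\rangle$.

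Next I would make explicit the dictionary between $t_0,\dots,t_6$ and period data. In the enhanced moduli picture underlying the system (\ref{lovely}), the $t_i$ are the coordinates of a pair $(W_\psi,\{\alpha_1,\alpha_2,\alpha_3,\alpha_4\})$, where $\{\alpha_i\}$ is a basis of $H^3_{\dR}$ adapted to the Hodge filtration and normalized so that the intersection matrix $[\langle\alpha_i,\alpha_j\rangle]$ equals a fixed constant (symplectic) matrix. Expanding $\omega_0$ and its successive covariant derivatives $\nabla_\theta\omega_0,\nabla_\theta^2\omega_0,\nabla_\theta^3\omega_0$ in the basis $\{\alpha_i\}$ expresses these forms through the $t_i$; in particular the combination $t_4-t_0^5$ enters as the leading coefficient governing $\nabla_\theta^3\omega_0$, while $t_5$ appears as the normalizing period/discriminant entry. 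The decisive point is that (\ref{lovely}) is precisely the Gauss--Manin connection of the family written in this frame along $\theta=5q\,\partial_q$; granting this, the bracket $\langle\omega_0,\nabla_\theta^3\omega_0\rangle$ is read off from the connection matrix as a polynomial in the $t_i$ divided by the appropriate power of $t_5$.

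Carrying out this computation against the constant intersection matrix produces the rational function $\frac{-(t_4-t_0^5)^2}{625\,t_5^3}$. To pin down the normalization I would expand both sides in $q$: the initial data (\ref{22july2010}) give $t_0=\frac15+24q+\cdots$ and hence the exhibited Lambert series $5+2875\frac{q}{1-q}+\cdots$, which matches the classical instanton expansion $5+\sum_{d\ge1}n_d\,d^3\frac{q^d}{1-q^d}$ of the mirror-quintic Yukawa coupling computed by Candelas et al. Agreement of the leading term $5$ (the classical triple self-intersection of the quintic) fixes the overall constant $625=5^4$ and confirms the power of $t_5$ in the denominator.

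The main obstacle is the middle step: proving rigorously that (\ref{lovely}) is the Gauss--Manin connection in the adapted frame and that the $t_i$ carry the claimed period meaning, with every power of $5$ (the factors $625$ and $3125$) correctly tracked. Once the frame is identified and the transversality orthogonalities are applied, the Yukawa coupling is forced to equal the designated entry of the connection matrix, so the identity $\frac{-(t_4-t_0^5)^2}{625\,t_5^3}=Y$ reduces to bookkeeping of constants, which the matching $q$-expansion then certifies.
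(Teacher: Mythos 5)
Your proposal is sound, but it takes a genuinely different route from the paper. The paper's proof is a short direct computation: it starts from Morrison's explicit period formula $k_{\tau\tau\tau}=\frac{-5^{-4}a^6}{(z\frac{\partial\tau}{\partial z})^3(z-1)(\int_{\delta_1}\eta)^2}$, substitutes $z=t_4/t_0^5$ and $\int_{\delta_1}\eta=a^3t_0$ via the period dictionary of Theorem \ref{main3}, and then uses only the first and fifth lines of (\ref{lovely}) to evaluate $t_0\dot t_4-5\dot t_0t_4=-t_0^5t_4+t_4^2$, which makes the whole expression collapse to $\frac{-5^{-4}(t_4-t_0^5)^2}{t_5^3}$; all constants are inherited from Morrison's formula, so no separate normalization check is needed. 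You instead work from the intrinsic definition $\langle\omega_0,\nabla_\theta^3\omega_0\rangle$, kill the lower brackets by Griffiths transversality, and read the coupling off the Gauss--Manin matrix in the adapted symplectic frame --- which is essentially the content the paper defers to \S\ref{moduli3}, where $\nabla_{\tilde{\rm Ra}}$ in the basis $\hat\omega_i$ has $\frac{(t_4-t_0^5)^2}{5^7t_5^3}$ as its only nonconstant entry. Your route is more conceptual (it explains why the answer is rational in the $t_i$ and why $t_5^3$ appears, namely from the reparametrization $\partial_\tau=\frac{1}{t_5}{\rm Ra}$ cubed), but be aware that the raw frame computation yields the constant $5^{-7}$, not $5^{-4}$; the missing factor $5^3$ sits in the normalizations of $\omega_0$ versus $\hat\omega_1$ and of $\theta=5q\partial_q$ versus $2\pi i\,q\partial_q$. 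Your plan to fix this by matching the leading coefficient $5$ of the $q$-expansion is legitimate only because the two quantities are a priori proportional (both equal $c^2x^3\langle\hat\omega_1,\nabla^3_{\rm Ra}\hat\omega_1\rangle$ for the respective rescalings $c$ of the form and $x$ of the derivation, the cross terms dying against $F^3$); you should state that proportionality explicitly, since otherwise matching one coefficient proves nothing. With that point made, and granting Proposition \ref{24may2010} and the parametrization of \S\ref{omidcreche} (which the paper's own proof also presupposes), your argument closes.
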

The $q$-expansion of the Yukawa coupling is calculated by Candelas, de la Ossa, Green, 
Parkers in \cite{can91}, see also \cite{mo92}. Using physical arguments they showed that $n_d$ must be the number of degree $d$ rational curves inside a generic quintic three fold. 
However, from mathematical point of view we have the Clemens conjecture which claims that there are finite number of such curves for all $d\in \mathbb N$. This conjecture is established for $d\leq 9$ and remains open for $d$ equal to $10$ or bigger than it. The Gromov-Witten  invariants 
$N_d$  can be calculated using the well-known formula $
N_d=\sum_{k\mid d}\frac{n_{d/k}}{k^3}$. 
The numbers $n_d$ are called instanton numbers or BPS states degeneracies.  
The $\C$-algebra generated by $t_i$'s can be considered as the theory of quasi modular forms attached to the family $W_\psi$.  We prove:
\begin{theo}
\label{29march10}
The functions $t_i, i=0,1,\ldots,6$ are algebraically independent over $\C$, this means that there is no polynomial $P$ in seven variables 
and with coefficients in $\C$ such that $P(t_0,t_1,\cdots,t_6)=0$. 
\end{theo}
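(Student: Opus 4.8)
The plan is to convert algebraic independence into the non‑existence of an invariant subvariety for a polynomial vector field. First I would clear denominators in \eqref{lovely}: the operator $\mathsf R:=t_5 D=5qt_5\frac{\partial}{\partial q}$ sends each $t_i$ to a polynomial $P_i\in\C[t_0,\dots,t_6]$, since $t_5\dot t_5=t_6$, the quantity $t_5\dot t_6$ is polynomial, and the remaining five right‑hand sides become polynomial after clearing the single denominator $t_5$. Thus $\mathcal V:=\sum_{i=0}^6 P_i\,\partial_{t_i}$ is a polynomial vector field on $\A^7$. Consider the $\C$-algebra map $\Phi\colon\C[x_0,\dots,x_6]\to\C[[q]]$, $x_i\mapsto t_i$, with prime kernel $\mathfrak p=\ker\Phi$; the $t_i$ are algebraically independent precisely when $\mathfrak p=0$, i.e. when the irreducible variety $Z:=V(\mathfrak p)\subseteq\A^7$ has $\dim Z=7$. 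The structural observation is that $Z$ is $\mathcal V$-invariant: if $P\in\mathfrak p$ then $P(t)=0$ in $\C[[q]]$, so $0=t_5 D\big(P(t)\big)=(\mathcal V P)(t)$ and hence $\mathcal V P\in\mathfrak p$. Note that $\dim Z$ can be as large as $7$ even though all $t_i$ are functions of the single variable $q$, because the $t_i$ are \emph{transcendental} in $q$ (already the Yukawa coupling of Theorem \ref{29.03.10} is non‑constant).

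Next I would exploit the quasi‑homogeneity of \eqref{lovely}. Assigning the weights $(w_0,\dots,w_6)=(1,2,3,4,5,0,4)$ makes every equation homogeneous with $\mathcal V$ raising weight by $4$, while the independent assignment $(0,0,0,0,0,1,1)$ makes $\mathcal V$ weight‑preserving. Together these define a faithful action of a torus $\Gm^2$ on $\A^7$ for which $\mathcal V$ is a common eigen‑vector field. The aim is to run a descent in the spirit of the classical proof that $E_2,E_4,E_6$ are algebraically independent: assuming $\mathfrak p\neq0$, choose a nonzero relation of least degree, regard it as a polynomial in one $t_i$ over the subalgebra generated by the others, apply $\mathcal V$, and use homogeneity to cancel the leading term and produce a relation of strictly smaller degree, contradicting minimality once a base case of independence is in hand. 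The two gradings are exactly what let one isolate leading terms and control the action of $\mathcal V$ on their coefficients.

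The heart of the matter, and where I expect the real work to lie, is this base case: showing that no proper $\mathcal V$-invariant subvariety passes through the generic point of the solution leaf. I would settle it geometrically. By the origin of the $t_i$ in the Gauss–Manin connection of the family $W_\psi$, the coordinates $t_0,\dots,t_6$ parametrize the moduli space of pairs (a Calabi–Yau variety $W_\psi$, a symplectic frame of $H^3$ adapted to the Hodge filtration and normalized), of dimension $1+\dim B=7$ where $B$ is a Borel subgroup of $\mathrm{Sp}(4,\C)$, and $\mathcal V$ is the associated modular vector field. The solution of \eqref{lovely} is a single leaf of the corresponding foliation, so $\dim Z$ is the dimension of the Zariski closure of this leaf. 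I would then deduce $\dim Z=7$ from Zariski density of the monodromy of $W_\psi$ in $\mathrm{Sp}(4,\C)$: any nonconstant $\mathcal V$-invariant algebraic relation would descend to a proper monodromy‑invariant algebraic subset of the period domain, which density forbids — precisely the $\mathrm{Sp}(4)$ analogue of the density of $\mathrm{SL}(2,\Z)$ underlying the Ramanujan case.

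The main obstacle is therefore the transcendence input itself: proving that the leaf is Zariski dense, equivalently that the only $\mathcal V$-invariant prime contained in $\mathfrak p$ is $(0)$. I expect the homogeneous descent of the second paragraph to reduce this to a minimal statement that a small, geometrically meaningful subset of the $t_i$ (those coming from honest periods) are algebraically independent, and Zariski density of the $\mathrm{Sp}(4,\C)$-monodromy to be the clean way to establish that base case. What remains is routine but must be verified: that the explicit polynomial change of coordinates between the $t_i$ and the period/frame data neither creates nor destroys algebraic relations, so that $\mathrm{trdeg}_{\C}\,\C(t_0,\dots,t_6)=\dim Z=7$ as desired.
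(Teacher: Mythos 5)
Your framework (pass to the polynomial vector field $\mathcal V=t_5D$, observe that the Zariski closure $Z$ of the solution is $\mathcal V$-invariant, and feed in Zariski density of the monodromy in ${\rm Sp}(4,\C)$) is sound as far as it goes, and the density input is indeed the same transcendence engine the paper uses: via Saxl--Seitz the differential Galois group of the Picard--Fuchs equation is all of ${\rm Sp}(4,\C)$, so the sixteen periods $x_{ij}$ have transcendence degree $10$ over $\C(\psi)$ and their ideal of relations is exactly the six symplectic quadrics (Proposition \ref{27aug2010}). But your proposal has two genuine gaps. First, the weight-descent of your second paragraph is not an argument: you never identify the base case it is supposed to reduce to, and the two gradings by themselves cannot produce one (any quasi-homogeneous system admits invariant subvarieties compatible with the torus action). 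The paper uses the weights (\ref{7sep2010}) only for bookkeeping and normalization, not for any inductive descent, and the classical $E_2,E_4,E_6$ proof you invoke does not run on weights alone either.

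Second, and more seriously, the step you call ``routine'' is where the proof actually closes. Density of the monodromy gives ${\rm trdeg}_\C\,\C(\psi,x_{11},\ldots,x_{44})=11$; it does \emph{not} by itself forbid the seven particular combinations of Theorem \ref{main3} from satisfying a relation --- they are seven functions of eleven independent quantities, and this is not a ``change of coordinates'' whose effect on relations is automatic. One must show that the map from the $10$-dimensional variety $V(I)$ (over $\C(\psi)$, resp.\ $11$-dimensional over $\C$) to $\A^7$ defined by the explicit period expressions is dominant. The paper does exactly this: it verifies by a Gr\"obner-basis computation that the generic fiber, i.e.\ the variety of the ideal $\langle t_i-k_i\rangle+I$, has dimension $16-6-7=3$. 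Without that concrete dominance check (or an equivalent one), your argument does not conclude; your appeal to ``no proper monodromy-invariant algebraic subset of the period domain'' proves density of the full period orbit, not of its image under the specific seven-component map.
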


Calculation of instanton numbers by our differential equation (\ref{lovely}) or by using periods, see \cite{can91,mo92}, 
or by constructing moduli spaces of maps from curves to projective spaces,  see \cite{kon95},  leads to the fact that they are rational 
numbers. It is conjectured that all $n_d$'s are integers (Gopakumar-Vafa conjecture). 
Some partial results regarding this conjecture is established recently by Kontsevich-Schwarz-Vologodsky and Krattenthaler-Rivoal.

All the quantities $t_i,\ i=0,1,\ldots,6$ and $q$ can be written in terms of the periods of the family $W_\psi$. The differential form
$$
\Omega=\frac{x_4dx_0\wedge dx_1\wedge dx_2}{\frac{\partial Q}{\partial x_3}}, 
$$ 
where $Q$ is the defining polynomial of $W_\psi$, induces a holomorphic 3-form in $W_\psi$ which we denote it by the same letter $\Omega$. 
Note that $5\psi\Omega$ is 
the standard choice of a holomorphic 
differential $3$-form on $W_\psi$ (see \cite{can91}, p. 29). Let also  $\delta_1,\delta_2,\delta_3,\delta_4$ be a particular basis of 
$H_3(W_\psi,\Q)$ 
which will be explained 
in \S \ref{5aug2010}, and 
$$
x_{ij}=\frac{\partial^{j-1}}{\partial \psi^{j-1}}\int_{\delta_i}\Omega,\ i,j=1,2,3,4.
$$ 
\begin{theo}
\label{main3}
 The $q$-expansion of $t_i$'s are convergent and if  we set $q=e^{2\pi i\frac{x_{21}}{x_{11}}}$ then
\begin{eqnarray*}
t_0 &=& a^{-3} 
{\psi}x_{11} \\
t_1 &=&a^{-6}
625x_{11}\left (5\psi^3x_{12}+5\psi^4x_{13}+(\psi^5-1)x_{14}\right )
\\
t_2 &=&a^{-9}
(-625)
x_{11}^2
\left ( 5\psi^3x_{11}+(\psi^5-1)x_{13}\right )
\\
t_3 &=&a^{-12}
625
x_{11}^3
\left (-5\psi^4x_{11}+(\psi^5-1)x_{12}\right )
\\
t_4 &=&a^{-15}
x_{11}^5\\
 t_5 &=&a^{-11}
\frac{-1}{5}(\psi^5-1)
x_{11}^2
\left (x_{12}x_{21}-x_{11}x_{22}\right )
\\
  t_6 &=& 
 a^{-23}\frac{1}{25}(\psi^5-1)
x_{11}^5
\left (
-5\psi^4x_{11}x_{12}x_{21}-2(\psi^5-1)x_{12}^2x_{21}-(\psi^5-1)x_{11}x_{13}x_{21}+ \right. 
\\ & & \left. 5\psi^4x_{11}^2x_{22}+2(\psi^5-1)x_{11}x_{12}x_{22}+(\psi^5-1)x_{11}^2x_{23}\right ), 
\end{eqnarray*}
where $a=\frac{2\pi i}{5}$.
\end{theo}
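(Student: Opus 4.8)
The plan is to check that the seven period expressions on the right-hand side, regarded as functions of $q$ through the mirror map $\tau=\frac{x_{21}}{x_{11}}$, $q=e^{2\pi i\tau}$, satisfy the system (\ref{lovely}) together with the initial data (\ref{22july2010}); since those data determine the formal solution uniquely, the period functions must then coincide with $t_0,\ldots,t_6$. Write $s_0,\ldots,s_6$ for the right-hand sides. The starting point is the Picard--Fuchs equation of $W_\psi$. As this is a one-parameter family of Calabi--Yau threefolds with $\dim H^3=4$, the period $\int_\delta\Omega$ satisfies a fourth order linear ordinary differential equation in $\psi$, so that $\Omega,\nabla_{\partial_\psi}\Omega,\nabla_{\partial_\psi}^2\Omega,\nabla_{\partial_\psi}^3\Omega$ is a frame of the de~Rham bundle which, by Griffiths transversality, is adapted to the Hodge filtration, and the Gauss--Manin connection in this frame is the companion matrix of the operator. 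In these terms $x_{i1},x_{i2},x_{i3},x_{i4}$ are the values of $\int_{\delta_i}$ on this frame, whence $\partial_\psi x_{ij}=x_{i,j+1}$ for $j\le 3$ and $\partial_\psi x_{i4}$ is the explicit linear combination of $x_{i1},\ldots,x_{i4}$ read off from the Picard--Fuchs operator.

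The core of the argument is to differentiate the $s_i$ and compare with (\ref{lovely}). First I would convert between the two derivations: by the chain rule
$$
\dot{\ }=5q\partial_q=\frac{1}{a}\partial_\tau=\frac{1}{a}(\partial_\psi\tau)^{-1}\partial_\psi,\qquad \partial_\psi\tau=-\frac{x_{12}x_{21}-x_{11}x_{22}}{x_{11}^2}.
$$
Comparing with the formula for $s_5$ shows that $(\partial_\psi\tau)^{-1}$ is, up to the factor $\frac{(\psi^5-1)x_{11}^4}{5a^{11}}$, exactly $\frac{1}{s_5}$; this is the structural reason for the $\frac{1}{t_5}$ occurring throughout (\ref{lovely}). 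Applying $\partial_\psi$ to each $s_i$, substituting $\partial_\psi x_{ij}=x_{i,j+1}$ and, whenever $x_{i4}$ is differentiated, the Picard--Fuchs relation, I would then verify that the resulting rational expression in the $x_{ij}$ and $\psi$ agrees, term by term and with the exact coefficients, with the right-hand side of (\ref{lovely}) after the substitution $t_j\mapsto s_j$. A direct check already confirms this mechanism for $\dot t_0$: since $s_0=a^{-3}\psi x_{11}$ involves only $x_{11},x_{12}$, one finds $s_5\,\dot s_0=\frac15 a^{-15}(\psi^5-1)x_{11}^4(x_{11}+\psi x_{12})$, which is precisely $\frac{6}{5}s_0^5+\frac{1}{3125}s_0 s_3-\frac15 s_4$. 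The equation for $\dot t_1$ is where the fourth order Picard--Fuchs relation genuinely enters, through $\partial_\psi x_{14}=x_{15}$, while $\dot t_5=t_6/t_5$ amounts to the definition $s_6=s_5\dot s_5$ and reproduces the stated formula for $t_6$; the Riemann bilinear relations for the polarized Hodge structure and the Yukawa normalization of Theorem \ref{29.03.10} are what force the remaining rational coefficients, in particular in the long equation for $\dot t_6$.

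Finally I would match the initial data and settle convergence. The values $t_{0,0}=\frac15$, $t_{0,1}=24$, $t_{4,0}=0$ and $t_{5,0}\neq 0$ follow from the classical asymptotics of the periods at the maximal unipotent monodromy point $\psi=\infty$ ($q=0$), where the distinguished holomorphic period $x_{11}$ has a known leading expansion and $\tau\to i\infty$; these are precisely the expansions recorded in \cite{can91,mo92}. Since the $x_{ij}(\psi)$ are holomorphic solutions of the Picard--Fuchs equation on a punctured neighbourhood of $\psi=\infty$ and the mirror map $\psi\mapsto q$ is a local biholomorphism there, each $s_i$ is holomorphic in $q$ near $0$, so the $q$-expansions converge. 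I expect the main obstacle to be the second paragraph: it requires the explicit fourth order Picard--Fuchs operator and a careful bookkeeping of the rational constants, with the polarization relations and the Yukawa coupling of Theorem \ref{29.03.10} being indispensable for pinning down the exact coefficients $\frac{6}{5},\frac{1}{3125},125,1875,\ldots$ in (\ref{lovely}).
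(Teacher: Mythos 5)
Your verification strategy is sound and your one explicit check (the equation for $\dot t_0$) is correct, but it is a genuinely different route from the paper's own proof. The paper does not prove Theorem \ref{main3} by substituting the period expressions into (\ref{lovely}); instead it \emph{derives} the formulas from the moduli construction: $s_0,\ldots,s_4$ are obtained in \S\ref{12may2010} by solving the linear system $[\int_{\delta_i}\omega_j][s_1,s_2,s_3,625(s_4-s_0^5)]^{\tr}=C$ cutting out the leaf $L$ of ${\rm Ra}$, with the Cramer determinants then reduced modulo the ideal $I$ of quadratic period relations (Proposition \ref{27aug2010}); $s_5$ and $s_6$ are read off as the entries $\int_{\delta_2}\tilde\omega_2$, $\int_{\delta_2}\tilde\omega_3$ of the normalized period matrix of Proposition \ref{24may2010}; and the fact that these satisfy (\ref{lovely}) together with convergence and the initial data is established in \S\ref{omidcreche}--\S\ref{qexpension}. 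Your route — postulate the formulas, differentiate using $\partial_\psi x_{ij}=x_{i,j+1}$ and the Picard--Fuchs relation (\ref{31aug2010}), match against (\ref{lovely}), and invoke uniqueness of the formal $q$-series solution with the initial data (\ref{22july2010}) — is exactly the verification the author flags as possible in the Introduction (``one can check easily that they satisfy the ordinary differential equation''), so it is legitimate; it is shorter and more self-contained, at the price of (i) leaning on the uniqueness of the formal solution, which the paper asserts but does not prove, (ii) leaving six of the seven symbolic identities to ``bookkeeping'' (they do hold — e.g.\ $s_5\dot s_5=s_6$ is an identity needing no relations at all, and the first five equations close up using only the Picard--Fuchs operator, so your worry that the bilinear relations are ``indispensable'' there is overcautious), and (iii) giving no account of where the formulas come from, which is the moduli-theoretic content the paper's proof is designed to exhibit.
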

 Once all the above quantities are given, using the Picard-Fuchs of $x_{i1}$'s, see (\ref{31aug2010}), one can check easily that 
they satisfy the ordinary differential equation (\ref{lovely}). However, how we have calculated them, and in particular moduli interpretation 
of $t_i,\ i=0,1,\ldots,6$, will be explained throughout the present text.    

This work can be considered as a realization of a problem of Griffiths around 1970's on the automorphic form theory for the moduli of 
polarized Hodge structures, see \cite{gr70}. In our case $H^3(W_\psi,\C)$ is of dimension $4$ and it carries a Hodge decomposition with Hodge numbers $h^{30}=h^{21}=h^{12}=h^{03}=1$. As far as I know, this is the first case of automorphic function theory for families of varieties for which 
the corresponding Griffiths period domain is not Hermitian symmetric. It would be of interest to see how the results of this 
paper fit into the automorphic cohomology theory of Griffiths or vice versa.

Here, I would like to say some words about the methods used in the present text and whether one can apply them to other families of varieties. We construct affine coordinates  
for  the moduli of the variety $W_\psi$ enhanced with elements in its third de Rham cohomology, see \S \ref{moduli1}, \S\ref{moduli2} and \S\ref{moduli3}. 
Such a moduli turns out to be of dimension seven and  such coordinates, say  $t_i,\ i=0,1,\ldots,6$,  have certain automorphic properties with respect to the action
of an algebraic group (the action of discrete groups in the classical theory of automorphic functions is replaced with the action of algebraic groups). 
We use the Picard-Fuchs equation of the periods of $\Omega$ and calculate the Gauss-Manin connection  (see for instance \cite{kaod68}) of the universal family of Calabi-Yau varieties over 
the mentioned moduli space. The ordinary differential equation (\ref{lovely}), seen as a vector field on the moduli space, has some nice properties with respect to the Gauss-Manin connection which determines it 
uniquely. A differential equation of type (\ref{lovely}) can be introduced for other type of varieties, see \cite{ho06-1}, however, whether it has a particular solution with a reach enumerative geometry behind, depends 
strongly on some integral monodromy conditions, see \S\ref{12may2010}, \S \ref{polyrel} and \S\ref{5aug2010}. For the moment I suspect that the methods introduced in this article can be generalized
to arbitrary families of Calabi-Yau varieties and even  to some other cases where the geometry is absent, see for instance the list of Calabi-Yau operators in \cite{almzud, vanvan} and  a table of 
mirror consistent monodromy representations in \cite{dormor}. Since the theory of Siegel modular forms is well developed and in light of the recent work \cite{do10-1}, 
see also the references within there, the case of K3  surfaces is quit promising.  In the final steps of the present article Charles Doran informed me of the results obtained 
by Yamaguchi and Yau in \cite{yam04}.  
This and other connections with mathematical physics will be explored in forthcoming articles.   

We have calculated the differential equation (\ref{lovely}) and the first coefficients of $t_i$ by Singular, see \cite{GPS01}. 
The reader who does not want to calculate everything by his 
own effort can obtain the corresponding Singular code from my web page.



\section{Quasi modular forms}
\label{modularforms}
The differential equation (\ref{lovely}) is a generalization of the Ramanujan differential equation
\begin{equation}
\label{raman}
 \left \{ \begin{array}{l}
\dot t_1=t_1^2-\frac{1}{12}t_2 \\
\dot t_2=4t_1t_2-6t_3 \\
\dot t_3=6t_1t_3-\frac{1}{3}t_2^2
\end{array} \right. \ \ \ \ \  \dot t=12 q\frac{\partial }{\partial q}
\end{equation}
which is satisfied by the Eisenstein series:
\begin{equation}
\label{eisenstein}
t_i=a_k{\Big (}1+b_k\sum_{d=1}^\infty d^{2k-1}\frac{q^{d}}{1-q^d}{\Big )},\ \  k=1,2,3,
\end{equation}
where 
$$
(b_1,b_2,b_3)=
(-24, 240, -504),\ \ (a_1,a_2,a_3)=(1,12 ,8).
$$
We have calculated  (\ref{lovely}) using the Gauss-Manin connection of the family $W_\psi$ which is essentially the Picard-Fuchs differential equation of 
the holomorphic differential form of the family $W_\psi$. This is done in a similar way as we calculate (\ref{raman}) from the Gauss-Manin connection of a family of elliptic curves, 
see \cite{ho06-2,ho06-3}. The general theory of differential equations of type (\ref{lovely}) and (\ref{raman}) is developed in \cite{ho06-1}. Relations between the Gauss-Manin connection and 
Eisenstein series appear  in  the appendix of \cite{ka73}.  Let $g_1,g_2,g_3$ be the Eisenstein series (\ref{eisenstein}). The  $\C$-algebra 
$\C[g_1,g_2,g_3]$ 
is freely generated by $g_1,g_2,g_3$. With $\deg(g_i)=i, i=1,2,3$, its homogeneous pieces are quasi-modular forms over $\SL 2\Z$.  It can be shown that any other quasi-modular form for subgroups of $\SL 2\Z$ with finite index, is in the algebraic closure of $\C(g_1,g_2,g_3)$.

\section{Moduli space, I}
\label{moduli1}
In the affine coordinates $x_0=1$, the variety  $W_\psi$ 
is given by:
$$
\{(x_1,x_2,x_3,x_4)\in \mathbb C ^ 4\mid f=0\}/G,
$$
where
$$
f=-z-x_1^5-x_2^5-x_3^5-x_4^5+5x_1x_2x_3x_4
$$
and we have introduced a new parameter $z:=\psi^{-5}$. We also use  $W_{1,z}$  to denote the variety $W_\psi$. 
For $z=0,1,\infty$ the variety $W_{1,z}$ is singular and for all others it is a smooth variety of
 complex dimension $3$. From now on, by $W_{1,z}$ we mean a smooth one.
Up to constant there is a unique holomorphic three form on $W_{1,z}$ which is given by
$$
\eta=\frac{ dx_1\wedge dx_2\wedge dx_3\wedge dx_4}{df}.
$$
 Note that the pair $(W_{1,z},5\eta)$ is isomorphic to 
$(W_\psi,5\psi\Omega)$, with $\Omega$ as in the Introduction. The later is used in \cite{can91} p. 29. 
The third de Rham cohomology of $W_{1,z}$, namely $H^{3}_\dR(W_{1,z})$,  carries a Hodge decomposition with Hodge numbers $h^{30}=h^{21}=h^{12}=h^{03}=1$. 
By Serre duality $H^2(W_{1,z},\Omega^1)\cong H^1(W_{1,z},\Theta)$, where $\Omega^1$ (rep. $\Theta$) is
 the sheaf of holomorphic differential $1$-forms (resp. vector fields) on $W_{1,z}$. 
Since $h^{21}=\dim_\C H^2(W_{1,z},\Omega^1)=1$, the deformation space of $W_{1,z}$ is one dimensional. 
This means that $W_{1,z}$ can be deformed only through the parameter $z$. In fact $z$ is the classifying function of such varieties. Note that the finite values of $z$ does not cover the smooth variety $W_{\psi},\ \psi=0$.

Let us take the polynomial ring $\C[t_0,t_4]$ in two variables $t_0,t_4$ (the variables $t_1,t_2$ and $t_3$ will appear later). It can be seen easily that
the moduli $S$ of the pairs $(W,\omega)$, where $W$ is as above and $\omega$ is a 
holomorphic differential form on $W$, is isomorphic to
$$
S\cong \C^2\backslash\{(t_0^5-t_4)t_4=0\},
$$ 
where we send the pair $(W_{1,z},a\eta)$ to $(t_0,t_4):=(a^{-1}, za^{-5})$. The multiplicative group $G_m:=\C^*$ acts on $S$ by:
 $$
(W,\omega)\bullet k=(W,k^{-1}\omega),\ k\in G_m,\ (W,\omega)\in S.
$$
In coordinates $(t_0,t_4)$ this corresponds to
\begin{equation}
\label{poloar}
(t_0,t_4)\bullet k=(kt_0,k^{5}t_4),\ (t_0,t_4)\in S,\ k\in G_m.
\end{equation}
We denote by $(W_{t_0,t_{4}},\omega_1)$ the pair $(W_{1,\frac{t_4}{t_0^{5}}},t_0^{-1}\eta)$. The one parameter family $W_{1,z}$ (resp. $W_\psi$) can be recovered by putting  
$t_0=1$ and $t_4=z$ (resp. $t_0=\psi$ and $t_4=1$). In fact, the pair $(W_{t_0,t_4},\omega_1)$ in the affine chart $x_0=1$ is given by:
\begin{equation}
\left (\{ f_{t_0,t_4}(x)=0\}/G,\ \frac{ dx_1\wedge dx_2\wedge dx_3\wedge dx_4}{df_{t_0,t_4}}\right ),
\end{equation}
where
$$
f_{t_0,t_4}:=-t_4-x_1^5-x_2^5-x_3^5-x_4^5+5t_0x_1x_2x_3x_4.
$$
\section{Gauss-Manin connection, I}
\label{gmI}
We would like to calculate the Gauss-Manin connection 
$$
 \nabla:H_{\dR}^{3}(W/S)\to \Omega_S^1\otimes_{\O_S}H_{\dR}^{3}(W/S).
$$
of the  two parameter proper  family of 
varieties $W_{t_0,t_4},\ (t_0,t_4)\in S$.
By abuse of notation we use $\frac{\partial }{\partial t_i},\ i=0,4$ 
instead of $\nabla_{\frac{\partial}{\partial t_i}}$. We calculate $\nabla$ with respect to the 
basis
$$
\omega_i= {\frac{\partial^{i-1}}{\partial t_0^{i-1}}}(\omega_1),\ i=1,2,3,4
$$
of global sections of $H^3_\dR(W/S)$. 
For this purpose we return back to the one parameter case. We set $t_0=1$ and $t_{4}=z$ and calculate the Picard-Fuchs equation of $\eta$ with respect to the parameter $z$:
$$
{\frac{\partial^{4}\eta}{\partial z^{4}}}=\sum_{i=1}^{4} a_i(z){\frac{\partial^{i-1}\eta}{\partial z^{i-1}}} \ \ \ \ \text{    modulo relatively exact forms.}
$$
This is in fact the linear differential equation 
\begin{equation}
\label{18fev2009}
I''''=\frac{-24}{625z^4-625z^3}I+
\frac{-24z+5}{5z^4-5z^3}I'+
\frac{-72z+35}{5z^3-5z^2}I''+
\frac{-8z+6}{z^2-z}I'''
\end{equation}
which is calculated in \cite{can91}, see also \cite{ho06-1} for some algorithms which calculate such differential equations. 
It is satisfied by the periods 
$I(z)=\int_{\delta_z}\eta,\ \delta\in H_3(W_{1,z},\Q)$ of the differential form $\eta$ on the the 
family $W_{1,z}$. 
In the basis ${\frac{\partial^{i}\eta}{\partial z^{i}}},\ \ i=0,1,2,3$ the Gauss-Manin 
connection matrix has the form
\begin{equation}
\label{06jul2010}
A(z)dz:=
\begin{pmatrix}
0&1&0&0\\
0&0&1&0\\
0&0&0&1\\
a_1(z)&a_{2}(z)&a_{3}(z)&a_{4}(z)\\
\end{pmatrix}dz.
\end{equation}
Now, consider the identity map 
$$
g:W_{{(t_0,t_{4})}}\to W_{1,z},\  
$$
which satisfies $g^*\eta=t_0\omega_1$. Under this map
$$
\frac{\partial}{\partial z}=\frac{-1}{5}\frac{t_0^{6}}{t_{4}}\frac{\partial}{\partial t_0}\left (=t_0^{5}\frac{\partial}{\partial t_{4}}\right ).
$$
From these two equalities we obtain a matrix $S=S(t_0,t_{4})$ such that
$$
[\eta, {\frac{\partial\eta}{\partial z}}, {\frac{\partial^2\eta}{\partial z^2}}, 
{\frac{\partial^{3}\eta_1}{\partial z^{3}}}]^\tr=
S^{-1}[\omega_1,\omega_2,\omega_3,\omega_{4}]^\tr, 
$$
where $\tr$ denotes the transpose of matrices, and the Gauss-Manin connection in the basis $\omega_i,\ i=1,2,3,4$ is:
$$
\left (dS+S\cdot A(\frac{t_{4}}{t_0^{5}})\cdot d(\frac{t_{4}}{t_0^{5}})\right )\cdot S^{-1}
$$
which is the following matrix after doing explicit calculations:
{\tiny
\begin{equation}
 \label{25aug2010}
\begin{pmatrix}
-\frac{1}{5t_4}dt_4
& dt_0+\frac{-t_0}{5t_4}dt_4
&0
&0
\\0
&\frac{-2}{5t_4}dt_4
&dt_0+\frac{-t_0}{5t_4}dt_4
&0
\\0
&0
&\frac{-3}{5t_4}dt_4
&dt_0+\frac{-t_0}{5t_4}dt_4
\\\frac{-t_0}{t_0^5-t_4}dt_0+\frac{t_0^2}{5t_0^5t_4-5t_4^2}dt_4
&\frac{-15t_0^2}{t_0^5-t_4}dt_0+\frac{3t_0^3}{t_0^5t_4-t_4^2}dt_4
&\frac{-25t_0^3}{t_0^5-t_4}dt_0+\frac{5t_0^4}{t_0^5t_4-t_4^2}dt_4
&\frac{-10t_0^4}{t_0^5-t_4}dt_0+\frac{6t_0^5+4t_4}{5t_0^5t_4-5t_4^2}dt_4
\end{pmatrix}
\end{equation}
}
From the above matrix or directly from (\ref{18fev2009}) one can check that the periods $x_{i1},\ i=1,2,3,4$ in the Introduction satisfy the Picard-Fuch equation:
\begin{equation}
\label{31aug2010}
I{''''}=\frac{-\psi}{\psi^5-1}I+\frac{-15\psi^2}{\psi^5-1}I'+ \frac{-25\psi^3}{\psi^5-1}I''+\frac{-10\psi^4}{\psi^5-1}I''',\ \ '=\frac{\partial}{\partial \psi}.
\end{equation}

\section{Intersection form and Hodge filtration}
\label{intersection}
For $\omega,\alpha\in H^3_\dR(W_{t_0,t_{4}})$ let
$$
\langle \omega,\alpha \rangle:=\frac{1}{(2\pi i)^3}\int_{W_{t_0,t_{4}}}\omega\cup \alpha.
$$
This is Poincar\'e dual to the intersection form in $H_3(W_{t_0,t_4},\Q)$. In 
$H^{3}_\dR(W_{t_0,t_4})$  we have the Hodge filtration
$$
\{0\}=F^4\subset F^3\subset F^2\subset F^1\subset F^0=H^{3}_\dR(W_{t_0,t_4}),\ \ \dim_\C(F^i)=4-i.
$$
There is a relation between the Hodge filtration and the intersection form which is given by the following collection of equalities:
$$
\langle F^i,F^j\rangle=0, \ i+j\geq 4. 
$$
The Griffiths transversality is a property combining the Gauss-Manin connection and the Hodge filtration.
It says that  the Gauss-Manin connection sends $F^i$ to $\Omega^1_{S}\otimes F^{i-1}$ for $i=1,2,3$. 
Using this we conclude 
that:
$$
\omega_i\in F^{4-i}, \ i=1,2,3,4.
$$ 
\begin{prop}
\label{19aug2010}
 The intersection form in the basis $\omega_i$ is:

$$
 [\langle \omega_i,\omega_j\rangle]=
\begin{pmatrix}
0             &       0     &      0    &       \frac{1}{625}(t_4-t_0^5)^{-1} \\
0             &       0     &    -\frac{1}{625}(t_4-t_0^5)^{-1}  & -\frac{1}{125}t_0^4(t_4-t_0^5)^{-2}  \\         
0             &       \frac{1}{625}(t_4-t_0^5)^{-1}&0&           \frac{1}{125}t_0^3(t_4-t_0^5)^{-2} \\      
-\frac{1}{625}(t_4-t_0^5)^{-1} & \frac{1}{125}t_0^4(t_4-t_0^5)^{-2} & -\frac{1}{125}t_0^3(t_4-t_0^5)^{-2} &0      
\end{pmatrix}
$$
\end{prop}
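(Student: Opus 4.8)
\emph{Plan.} I would pin down the four a priori nonzero entries of the matrix by combining three inputs: the symmetry type of the cup product, the compatibility of the Hodge filtration with the intersection form, and the flatness of the intersection form under the Gauss--Manin connection. The single remaining overall scalar would then be fixed by one explicit computation.

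\emph{Step 1 (symmetry and Hodge constraints).} Since $W_{t_0,t_4}$ is a threefold, the cup product on $H^3_\dR$ is alternating, so $[\langle\omega_i,\omega_j\rangle]$ is antisymmetric with vanishing diagonal. Recall that $\omega_i\in F^{4-i}$ and that $\langle F^a,F^b\rangle=0$ for $a+b\ge 4$. Applying this with $a=4-i$, $b=4-j$ gives $\langle\omega_i,\omega_j\rangle=0$ whenever $i+j\le 4$. This forces the vanishing of every entry except $\langle\omega_1,\omega_4\rangle,\langle\omega_2,\omega_3\rangle,\langle\omega_2,\omega_4\rangle,\langle\omega_3,\omega_4\rangle$ and their antisymmetric partners; in particular $\langle\omega_1,\omega_3\rangle=0$ (this is $\langle F^3,F^1\rangle=0$) and $\langle\omega_2,\omega_2\rangle=0$. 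Hence only four unknown functions of $(t_0,t_4)$ survive.

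\emph{Step 2 (flatness).} Because the pairing is topological, the connection is compatible with it: $d\langle\omega,\alpha\rangle=\langle\nabla\omega,\alpha\rangle+\langle\omega,\nabla\alpha\rangle$. Using $\omega_{i+1}=\nabla_{\partial_{t_0}}\omega_i$ for $i=1,2,3$ and reading $\nabla_{\partial_{t_0}}\omega_4$ off the last row of (\ref{25aug2010}) (equivalently from the Picard--Fuchs operator (\ref{31aug2010})), I would differentiate the already-known entries. For example $0=\partial_{t_0}\langle\omega_1,\omega_3\rangle=\langle\omega_2,\omega_3\rangle+\langle\omega_1,\omega_4\rangle$ gives $\langle\omega_2,\omega_3\rangle=-\langle\omega_1,\omega_4\rangle$; differentiating $\langle\omega_1,\omega_4\rangle$ and $\langle\omega_2,\omega_3\rangle$ and combining yields the linear ODE $\partial_{t_0}\langle\omega_1,\omega_4\rangle=\frac{-5t_0^4}{t_0^5-t_4}\langle\omega_1,\omega_4\rangle$, whence $\langle\omega_1,\omega_4\rangle=C(t_4)(t_0^5-t_4)^{-1}$, with $\langle\omega_2,\omega_4\rangle$ and $\langle\omega_3,\omega_4\rangle$ expressed through it. Feeding the $dt_4$-part of (\ref{25aug2010}) into the same identity forces $C(t_4)$ to be constant. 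Thus the whole matrix is determined up to one global scalar, reflecting the uniqueness up to scale of the monodromy-invariant symplectic form on the irreducible Gauss--Manin local system.

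\emph{Step 3 (normalisation) and main obstacle.} The flatness system is homogeneous and so cannot fix the scalar; a direct evaluation of one entry is genuinely required, and this is where the conceptual content lies. I would compute $\langle\omega_1,\omega_4\rangle$ in one of two ways. Via periods: by the Riemann bilinear relations the matrix $[\langle\omega_i,\omega_j\rangle]$ is, up to the explicit factor $(2\pi i)^{-3}$, assembled from the period matrix $[x_{ij}]$ and the constant topological intersection matrix of the basis $\delta_1,\dots,\delta_4$ of $H_3(W_\psi,\Q)$ fixed in \S\ref{5aug2010}. Via residues: since $\eta$ and its $\partial_{t_0}$-derivatives are Griffiths residues of rational $4$-forms with prescribed pole order along $\{f=0\}$, the cup product translates into the Grothendieck residue pairing on the Jacobian ring $\C[x]/\jacob(f_{t_0,t_4})$, whose top residue produces the constant. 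Either route should yield $\frac{1}{625}$ together with the correct signs. The only delicate point is the bookkeeping of normalisations in this last computation, the choice $\eta$ versus $5\psi\Omega$, the factorials arising from differentiating $f^{-1}$, and the $(2\pi i)^3$, all of which must be tracked exactly to land on $\frac{1}{625}$; the rest of the matrix is forced by Steps 1 and 2.
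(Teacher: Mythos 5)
Your proposal is correct and its core mechanism --- the Hodge vanishing $\langle F^a,F^b\rangle=0$ for $a+b\ge 4$ to kill the entries with $i+j\le 4$, followed by differentiating the known pairings against the Gauss--Manin connection to propagate to $\langle\omega_2,\omega_3\rangle$, $\langle\omega_2,\omega_4\rangle$, $\langle\omega_3,\omega_4\rangle$ --- is exactly what the paper does. The one genuine difference is how the entry $\langle\omega_1,\omega_4\rangle$ itself is obtained. You observe that flatness alone closes up: combining $\partial_{t_0}\langle\omega_1,\omega_4\rangle=\langle\omega_2,\omega_4\rangle+M_{44}\langle\omega_1,\omega_4\rangle$ with $\partial_{t_0}\langle\omega_2,\omega_3\rangle=\langle\omega_2,\omega_4\rangle$ and $\langle\omega_2,\omega_3\rangle=-\langle\omega_1,\omega_4\rangle$ gives $\partial_{t_0}\langle\omega_1,\omega_4\rangle=\tfrac12 M_{44}\langle\omega_1,\omega_4\rangle$, which (together with the $dt_4$-direction, as you note) pins down $\langle\omega_1,\omega_4\rangle$ up to a single constant; I checked that this ODE and the conclusion $C(t_4)\equiv\mathrm{const}$ are both right. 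The paper does not exploit this: it simply imports the full function $\langle 5\psi\Omega,\partial^3_\psi(5\psi\Omega)\rangle=\frac{1}{5^2}\frac{\psi^2}{1-\psi^5}$ from \cite{can91}, eq.\ (4.6), and converts coordinates via $\psi=t_0t_4^{-1/5}$, $\partial_\psi=t_4^{1/5}\partial_{t_0}$ to get $\langle\omega_1,\omega_4\rangle=5^{-4}(t_4-t_0^5)^{-1}$. So your route buys more self-containedness (only one scalar needs external input, and you correctly identify that no homogeneous flatness argument can produce it), while the paper's citation buys a complete proof with no residual computation. The only incomplete piece of your write-up is precisely that scalar: you sketch two standard ways to get $\frac{1}{625}$ (Riemann bilinear relations against the topological intersection matrix, or the Grothendieck residue pairing on the Jacobian ring) but carry out neither; to close the argument you would either have to push one of these normalisation computations through, or fall back on the same citation of \cite{can91} that the paper uses.
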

\begin{proof} 
Let $\Omega$ be the differential form $\omega_1$ with restricted parameters 
$t_0=\psi$ and $t_{4}=1$. We have 
$\langle5\psi \Omega,\frac{\partial^3(5\psi\Omega)}{\partial^3\psi}\rangle=
\frac{1}{5^2}\frac{\psi^2}{1-\psi^5}$ (see \cite{can91}, (4.6)). From this we get:
\begin{equation}
 \label{22july10}
\langle \omega_1,\omega_4\rangle=5^{-4}\frac{1}{t_4-t_0^5}.
\end{equation}
The corresponding calculations are as follows: In $(t_0,t_4)$ coordinates we have  $\psi={t_0}{t_4^{-\frac{1}{5}}}$
and $\frac{\partial}{\partial \psi}=
t_4^\frac{1}{5}
\frac{\partial}{\partial t_0}$ and
\begin{eqnarray*}
 \langle\psi\Omega,\frac{\partial^3\psi\tilde\eta}{\partial^3\psi}\rangle &=&
\langle t_0\omega_1,(t_4^\frac{1}{5}
\frac{\partial}{\partial t_0})^{(3)}(t_0\omega_1)\rangle \\
&=& \langle t_0\omega_1,t_0t_4^\frac{3}{5}\omega_4\rangle=  t_0^2 t_4^\frac{3}{5} \langle \omega_1,\omega_4\rangle.\\
\end{eqnarray*}
From another side 
$\frac{1}{5^2}\frac{\psi^2}{1-\psi^5}=\frac{1}{5^2} \frac{t_0^2 t_4^\frac{3}{5} }{t_4-t_0^5}$.

We make the derivation of the equalities $\langle \omega_1,\omega_3\rangle=0$ and (\ref{22july10}) with respect to $t_0$ and use the Picard-Fuchs equation of $\omega_1$ 
with respect to the parameter $t_0$ and with $t_4$ fixed:
$$
\frac{\partial \omega_4}{\partial t_0}=M_{41}\omega_1+M_{42}\omega_2+M_{43}\omega_3+M_{44}\omega_4
$$
Here, $M_{ij}$ is the $(i,j)$-entry of (\ref{25aug2010}) after setting $dt_4=0,\ dt_0=1$. 
We get
$$
\langle \omega_2,\omega_3\rangle=- \langle \omega_1,\omega_4\rangle,\ \langle \omega_2,\omega_4\rangle=\frac{\partial \langle \omega_1,\omega_4\rangle}{\partial t_0}-M_{44}
\langle \omega_1,\omega_4\rangle
$$
Derivating further the second equality we get:
$$
\langle \omega_3,\omega_4\rangle= \frac{\partial \langle \omega_2,\omega_4\rangle}{\partial t_0}-M_{43}\langle \omega_2,\omega_3\rangle-M_{44}
\langle \omega_2,\omega_4\rangle.
$$
\end{proof}

\section{Moduli space, II}
\label{moduli2}

Let $T$  be the moduli of pairs $(W,\omega)$, where $W$ is a Calabi-Yau variety as before
 and $\omega\in H_\dR^3(W)\backslash F^1$ and $F^1$ is the biggest non trivial piece of the
Hodge filtration of $H^3_\dR(W)$. In this section, we  construct good affine coordinates for the moduli space  $T$.

Let $G_m$ be the multiplicative group $(\C-\{0\},\ \cdot)$ and let $G_a$ be the additive group $(\C, +)$. Both these algebraic groups act on the moduli spaces $T$:
$$
(W,\omega)\bullet k=(W,k\omega),\ k\in G_m,\ (W,\omega)\in T,
$$
$$
(W,\omega)\bullet k=(W,\omega+k\omega'),\ k\in G_a,\ (W,\omega)\in T,
$$
where $\omega'$ is uniquely determined by
 $\langle \omega',\omega\rangle=1,\ \omega'\in F^{3}$. We would like to have 
affine coordinates $(t_0,t_1,t_2,t_3,t_{4})$ for  $T$ such that;
\begin{enumerate}
 \item 
We have a canonical map 
$$
\pi: T\to S, \ \ \ (W,\omega)\mapsto (W,\omega'),
$$
where $\omega'$ is determined uniquely by $\langle \omega',\omega\rangle=1, \ \omega'\in F^3$.
In terms of the coordinates $t_i$'s it is just the projection on $t_0,t_{4}$ coordinates.
\item
With respect to the action of $G_m$, $t_i$'s behave as bellow:
$$
t_i \bullet k= k^{i+1}t_i,\ i=0,1,\ldots,4.
$$
\item
With respect to the action of $G_a$, $t_i$'s behave as bellow:
$$
t_i \bullet k=t_i,\ i=0,2,3,4,\ \  k\in G_a,
$$
$$
t_1\bullet k=t_1+k,\ k\in G_a.
$$
\end{enumerate}
In order to construct $t_i$'s we take the family $W_{t_0,t_{4}}$ as before and three new variable $t_1,t_2,t_3$. One can verify easily  that
$$
\{(t_0,t_1,t_2,t_3,t_{4})\in \C^{5}\mid t_4(t_4-t_0^5)\not =0\}\cong T,\
$$
$$ 
(t_0,t_1,t_2,t_3,t_4)\mapsto (W_{t_0,t_{n+1}},\omega),
$$ 
where
\begin{equation}
\label{27nov2009}
 \omega=t_1\omega_1+t_2\omega_2+t_3\omega_3+\frac{\omega_{4}}{\langle \omega_1,\omega_{4} 
\rangle}.
\end{equation}

\section{Gauss-Manin connection, II}
\label{basis}
For the five parameter family $W_t, t:=(t_0,t_1,t_2,t_3,t_4)\in T$, we calculate the differential forms 
$\alpha_i,\ i=1,2,3,4$ in $T$ which are defined by the equality: 
\begin{eqnarray*}
 \nabla\omega &=& \sum_{i=1}^4 \alpha_i\otimes \omega_i,\\
\end{eqnarray*}
where $\omega$ is defined in (\ref{27nov2009}),
and we check that the $\Q(t)$ vector space spanned by $\alpha_i$ is exactly 
of dimension $4$ and so up to multiplication by a rational function in  
$\Q(t)$ there is a unique vector field $\rm Ra$ which satisfies
\begin{equation}
\label{11052010}
\alpha_i({\rm Ra})=0,\ \ i=1,2,3,4
\end{equation}
or equivalently $\nabla_{\rm Ra}\omega=0$. We calculate this vector field and get the following expression:
$$
{\rm Ra}=
(\frac{6}{5}t_0^5+\frac{1}{3125}t_0t_3-\frac{1}{5}t_4)\frac{\partial}{\partial t_0}+
(-125t_0^6+t_0^4t_1+125t_0t_4+\frac{1}{3125}t_1t_3) \frac{\partial}{\partial t_1}$$
$$
+(-1875t_0^7-\frac{1}{5}t_0^5t_1+2t_0^4t_2+1875t_0^2t_4+\frac{1}{5}t_1t_4+\frac{2}{3125}t_2t_3) \frac{\partial}{\partial t_2}+
$$
$$
(-3125t_0^8-\frac{1}{5}t_0^5t_2+3t_0^4t_3+3125t_0^3t_4+\frac{1}{5}t_2t_4+\frac{3}{3125}t_3^2)\frac{\partial}{\partial t_3}+
(5t_0^4t_4+\frac{1}{625}t_3t_4)\frac{\partial}{\partial t_4}.
$$
This appears in the first five lines of the ordinary differential equation (\ref{lovely}). 
The other pieces of this differential equation has to do with the fact that the choice of $\rm Ra$ is not unique. Let 
 $$
\alpha:=
\frac
{
t_0dt_{4}-5t_{4}dt_0
}{ 
(t_{4}-t_0^{5})t_{4} 
}.
$$
The vector field $\rm Ra$ turns to be unique after putting the condition
\begin{equation}
\label{11may2010}
 \alpha({\rm Ra})=1
\end{equation}
We have calculated ${\rm Ra}$ from (\ref{11may2010}) and (\ref{11052010}). The choice of $\alpha$ up to multiplication by a rational function is canonical (see bellow). 
However, choosing such a rational function does no seem to be canonical. 
\begin{prop}
\label{24may2010}
There is a unique basis $\tilde \omega_i,\ i=1,2,3,4$ 
of $H^3_{\dR}(W_t), \ t\in T$ such that 
\begin{enumerate}
\item 
It is compatible with the Hodge filtration, i.e. $\tilde\omega_i\in F^{4-i}\backslash F^{5-i}$.
\item
$\tilde\omega_{4}=\omega$ and 
$\langle  \tilde\omega_1,\tilde \omega_{4}\rangle=1$.
\item
The Gauss-Manin connection matrix $A$ of the family $W\to T$ in the mentioned basis 
is of the form
$$
A=\begin{pmatrix}
*& \alpha & 0&0\\
*&*&\alpha&0\\
*&*&*&b_4\alpha\\
*&*&*&*\\
\end{pmatrix}
$$
and 
$$
A({\rm Ra})=
\begin{pmatrix}
0& 1 & 0&0\\
0&0&1&0\\
0&b_2&b_3&b_4\\
0&0&0&0\\
\end{pmatrix}
$$
where $b_2,b_3,b_4\in \C[t]$.
\end{enumerate}
\end{prop}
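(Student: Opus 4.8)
\emph{Strategy.} The plan is to build the basis by iterating the vector field ${\rm Ra}$, reading the prescribed $A({\rm Ra})$ row by row as the recursion $\nabla_{\rm Ra}\tilde\omega_1=\tilde\omega_2$, $\nabla_{\rm Ra}\tilde\omega_2=\tilde\omega_3$, $\nabla_{\rm Ra}\tilde\omega_3=b_2\tilde\omega_2+b_3\tilde\omega_3+b_4\tilde\omega_4$, $\nabla_{\rm Ra}\tilde\omega_4=0$. First I would show that conditions (1) and (2) rigidify the two ends: since $F^3=\C\omega_1$ we have $\tilde\omega_1=c\,\omega_1$, and using (\ref{27nov2009}) with Proposition \ref{19aug2010} one gets $\langle\omega_1,\omega\rangle=1$, so that $\langle\tilde\omega_1,\tilde\omega_4\rangle=c=1$; thus $\tilde\omega_1=\omega_1$ and $\tilde\omega_4=\omega$ are forced. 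The first two rows of the prescribed $A({\rm Ra})$ then force $\tilde\omega_2=\nabla_{\rm Ra}\omega_1$ and $\tilde\omega_3=\nabla_{\rm Ra}^2\omega_1$, and the last row is exactly the defining property (\ref{11052010}) of ${\rm Ra}$. This already settles uniqueness and reduces the statement to checking that this explicit quadruple has the three properties.

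For property (1) I would use Griffiths transversality and the matrix (\ref{25aug2010}), whose superdiagonal is $\beta:=dt_0-\frac{t_0}{5t_4}dt_4$ with $\beta({\rm Ra})=-\frac{t_4-t_0^5}{5}\neq 0$ on $T$. Hence the Kodaira--Spencer step is an isomorphism, so $\tilde\omega_2\in F^2\backslash F^3$ and $\tilde\omega_3\in F^1\backslash F^2$; in particular the $\tilde\omega_i$ are a Hodge-adapted basis and the transition matrix $g$ from the $\omega_i$ is lower triangular. Property (2) holds by construction.

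For the shape of $A$ in property (3), the entries strictly above the superdiagonal vanish automatically, this being Griffiths transversality for a Hodge-adapted basis. For the superdiagonal, the transformation law under a lower-triangular $g$ gives $A_{i,i+1}=\frac{g_{ii}}{g_{i+1,i+1}}A^{\mathrm{old}}_{i,i+1}$, and since $A^{\mathrm{old}}_{12}=A^{\mathrm{old}}_{23}=A^{\mathrm{old}}_{34}=\beta=-\frac{t_4-t_0^5}{5}\alpha$ are all proportional to $\alpha$, each $A_{i,i+1}$ is a function times $\alpha$; evaluating on ${\rm Ra}$ and using $\alpha({\rm Ra})=1$ together with $A({\rm Ra})_{12}=A({\rm Ra})_{23}=1$ (true by the definition of $\tilde\omega_2,\tilde\omega_3$) pins $A_{12}=A_{23}=\alpha$ and $A_{34}=b_4\alpha$. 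For the shape of $A({\rm Ra})$, every row but the third holds by the recursive definition and (\ref{11052010}); in the third row only $A({\rm Ra})_{31}=0$ is in question, and this follows from the flatness of $\langle\ ,\ \rangle$ under $\nabla$: differentiating the constant $\langle\tilde\omega_1,\tilde\omega_4\rangle=1$ twice along ${\rm Ra}$ and using $\nabla_{\rm Ra}\tilde\omega_4=0$ gives $\langle\tilde\omega_2,\tilde\omega_4\rangle=\langle\tilde\omega_3,\tilde\omega_4\rangle=0$, whence $A({\rm Ra})_{31}=\langle\nabla_{\rm Ra}\tilde\omega_3,\tilde\omega_4\rangle={\rm Ra}\langle\tilde\omega_3,\tilde\omega_4\rangle=0$.

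\emph{Main obstacle.} I expect the genuinely delicate point to be proving $b_2,b_3,b_4\in\C[t]$, rather than mere rational functions on $T=\{t_4(t_4-t_0^5)\neq 0\}$. Tracking the top coefficient through $\tilde\omega_3=\nabla_{\rm Ra}^2\omega_1$ gives $b_4=-\frac{(t_4-t_0^5)^2}{5^7}$, a polynomial. Flatness applied to $\langle\tilde\omega_2,\tilde\omega_3\rangle$ yields $b_3=\frac{{\rm Ra}(b_4)}{b_4}$, and since ${\rm Ra}(t_4-t_0^5)=(t_4-t_0^5)(6t_0^4+\frac{1}{625}t_3)$ the discriminant cancels, leaving $b_3=12t_0^4+\frac{2}{625}t_3$, exactly the coefficient appearing in the last line of (\ref{lovely}). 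For $b_2$ one derives a further flatness relation expressing it through $b_3,b_4$ and ${\rm Ra}$-derivatives; showing that the discriminant again cancels is the crux. I would corroborate this both by the explicit computation and by a weight argument: under the $G_m$-action (\ref{poloar}), with $\deg t_i=i+1$, the data ${\rm Ra}$, $\alpha$ and the $\tilde\omega_i$ are equivariant, so each $b_i$ is weighted homogeneous of nonnegative degree, whence regularity on $T$ already forces polynomiality.
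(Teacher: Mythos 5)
Your construction is, in substance, the paper's construction reorganized around a cleaner principle. The paper starts from $\tilde\omega_i=\omega_i$ and performs a Gram--Schmidt-type modification: replace $\tilde\omega_2$ by $\tilde\omega_2+a\tilde\omega_1$ so that $\alpha_{11}({\rm Ra})=0$, rescale so that $\alpha_{12}$ becomes the prescribed multiple of $t_0dt_4-5t_4dt_0$, and repeat for $\tilde\omega_3$; since $\alpha({\rm Ra})=1$, these two normalizations are exactly your recursion $\tilde\omega_{i+1}=\nabla_{\rm Ra}\tilde\omega_i$. (Indeed, from (\ref{25aug2010}) one gets $\nabla_{\rm Ra}\omega_1=(-t_0^4-\tfrac{1}{3125}t_3)\omega_1+\tfrac{1}{5}(t_0^5-t_4)\omega_2$, which is precisely the paper's $\tilde\omega_2$.) What your version buys is a transparent uniqueness argument (the paper leaves uniqueness implicit in the forced choices of $a$ and $r$) and structural derivations, via flatness of $\langle\ ,\ \rangle$, of $A({\rm Ra})_{31}=0$, of $\langle\tilde\omega_2,\tilde\omega_3\rangle=-b_4$, and of $b_3={\rm Ra}(b_4)/b_4$, where the paper simply reports the output of a symbolic computation. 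Your identification of $\beta=dt_0-\tfrac{t_0}{5t_4}dt_4=-\tfrac{t_4-t_0^5}{5}\alpha$ and the lower-triangular transformation law for the superdiagonal are both correct.

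Two caveats on the polynomiality of the $b_i$, which you rightly single out as the delicate point. First, the weight argument does not close the gap: a weighted-homogeneous rational function of nonnegative degree that is regular on $T=\{t_4(t_4-t_0^5)\neq 0\}$ need not be a polynomial (e.g.\ $t_0^{10}/(t_4-t_0^5)$ has degree $5$). Second, there is no ``further flatness relation'' for $b_2$: by skew-symmetry of the cup product, $b_2$ enters the derivative of $\langle\tilde\omega_i,\tilde\omega_j\rangle$ only through the combinations $b_2\langle\tilde\omega_i,\tilde\omega_2\rangle$ with $i\le 2$ or $b_2(\langle\tilde\omega_2,\tilde\omega_3\rangle+\langle\tilde\omega_3,\tilde\omega_2\rangle)$, all of which vanish, so flatness determines $b_3$ and $b_4$ but is blind to $b_2$. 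The polynomiality of $b_2$ --- the cancellation of the denominators $t_4$ and $t_4-t_0^5$ coming from the last row of (\ref{25aug2010}) and from $g^{-1}$ in $\nabla_{\rm Ra}^3\omega_1$ --- genuinely requires the direct computation. Since you explicitly fall back on that computation, and since the paper itself establishes this point in exactly the same way (by exhibiting the computed $b_2$), this is a shared reliance on calculation rather than an error in your argument; but the two shortcuts you propose in its place do not work.
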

Our proof of the above proposition is algorithmic and in fact we calculate $b_i$'s 
$$
b_2=
-\frac{72}{5}t_0^8-\frac{24}{3125}t_0^4t_3-\frac{3}{5}t_0^3t_4-\frac{2}{1953125}t_3^2
$$
$$
b_3= 12t_0^4+\frac{2}{625}t_3, \ \ 
b_4=-\frac{1}{5^7}(t_0^5-t_4)^2
$$
and $\tilde \omega_i$'s:
$$
\tilde\omega_1=\omega_1, \ \tilde\omega_2=(-t_0^4-\frac{1}{3125}t_3)\omega_1+(\frac{1}{5}t_0^5-\frac{1}{5}t_4)\omega_2,\ \tilde \omega_{4}=\omega
$$
$$
\tilde\omega_3:=(-\frac{14}{5}t_0^8+\frac{1}{15625}t_0^5t_2-\frac{1}{625}t_0^4t_3-\frac{1}{5}t_0^3t_4-\frac{1}{15625}t_2t_4-\frac{2}{9765625}t_3^2)\omega_1+
$$
$$
(\frac{3}{5}t_0^9+\frac{2}{15625}t_0^5t_3-\frac{3}{5}t_0^4t_4-\frac{2}{15625}t_3t_4)\omega_2+
(\frac{1}{25}t_0^{10}-\frac{2}{25}t_0^5t_4+\frac{1}{25}t_4^2)\omega_3.
$$
The polynomials $b_2$ and $b_3$ appear in the last line of the ordinary differential equation (\ref{lovely}). 
\begin{proof}
The equalities in the second item and $\tilde \omega_1\in F^{3}$ determine 
both $\tilde\omega_1=\omega_1,\tilde \omega_{4}=\omega$ uniquely. 
We first take the 3-forms $\tilde \omega_i=\omega_i,\ i=2,3$ as in the previous section and 
write the Gauss-Manin connection of the five parameter family of Calabi-Yau varieties 
$W_t,\ t\in T$ in the basis 
$\tilde \omega_i,\ i=1,2,3,4$:
$$
\nabla[\tilde \omega_i]_{4\times 1}=[\alpha_{ij}]_{4\times 4}[\tilde\omega_i]_{4\times 1}.
$$
We explain how to modify  $\tilde\omega_2$ and $\tilde\omega_3$ 
and get the basis in the announcement of the proposition. 
Let $R$ be the $\Q(t)$ vector space generated by $\alpha_{4,i},\ i=1,2,\ldots,4$.
It does not depend on the choice of the basis $\tilde \omega_{i}$ and we already mentioned 
that it is of dimension $4$. 
If we replace $\tilde \omega_2$ by
$\tilde \omega_2+a\tilde\omega_1$ then $\alpha_{11}$ is replaced by 
$\alpha_{11}-a\alpha_{12}$. Modulo $R$ the space of differential forms on $T$ is one 
dimensional and since $\alpha_{12}\not \in R$, we choose $a$ in such a way that 
$\alpha_{11}-a\alpha_{12}\in R$. We do this and so we can assume that
 $\alpha_{11}\in R$.
 The result of our calculations shows that $\alpha_{12}$ is a multiple of  
$t_0dt_{4}-5t_{4}dt_0$. We replace $\omega_2$ by
$r\tilde \omega_2$ with some $r\in \Q(t)$ and get the desired form 
for $\alpha_{12}$. We repeat the same procedure 
for $\tilde\omega_3$. In this step we replace $\tilde\omega_3$ by
$r_3\tilde \omega_3+r_2\tilde\omega_2+r_1\tilde\omega_1$ with some $r_1,r_2,r_3\in \Q(t)$. 
\end{proof}

\section{Polynomial Relations between periods}
\label{polyrel}
We take a basis $\delta_1,\delta_2,\delta_3,\delta_4\in H_3(W_{t_0,t_4},\Q)$ such that the intersection form in this basis is given  by:
\begin{equation}
\label{intersectionmatrix}
 \Psi:=[\langle \delta_i,\delta_j\rangle]=\begin{pmatrix}
 0&    0&    0&    -\frac{6}{5}\\
0&    0&    \frac{2}{5}& 0\\   
0&    -\frac{2}{5}&0&    2\\
\frac{6}{5}&0&  -2 &0  
\end{pmatrix}.
\end{equation}
It is also convenient  to use the basis $[\tilde \delta_1,\tilde \delta_2,\tilde \delta_3, \tilde \delta_4]=[\delta_1,\delta_2,\delta_3, \delta_4]\Psi^{-1}$. In this basis the intersection 
form is $[\langle \tilde \delta_i,\tilde \delta_j\rangle]=\Psi^{-\tr}$. Let $\omega_i,\ i=1,2,3,4$ be 
the basis of the de Rham cohomology $H^3_\dR(W_{t_0,t_4})$ constructed in \S \ref{gmI} and let $\tilde \delta_i^p\in H^3(W_{t_0,t_4},\Q)$ be the Poincar\'e dual of $\tilde \delta_i$, that is, 
it is defined by the property 
$\int_{\delta}\tilde \delta_i^p=\langle \delta,\tilde\delta_i\rangle$ for all $\delta\in H_3(W_{t_0,t_4},\Q)$. If we write 
$\omega_i$ in terms of $\tilde \delta_i^p$ what we get is:
$$
[\omega_1,\omega_2,\omega_3,\omega_4]=[\tilde \delta_1^p,\tilde \delta_2^p,\tilde \delta_3^p, \tilde \delta_4^p][\int_{\delta_i}\omega_j]
$$    
that is, the coefficients of the base change matrix are the periods of $\omega_i$'s over $\delta_i$'s and not $\tilde \delta_i$'s.
The matrix $[\int_{\delta_i}\omega_j]$ is called the period matrix associated to the basis $\omega_i$ of $H^3_\dR(W_{t_0,t_4})$ and the basis $\delta_i$ of $H_3(W, \Q)$. 
We have 
\begin{equation}
 \label{24aug10}
[\langle \omega_i,\omega_j\rangle]=[\int_{\delta_i}\omega_j]^{\tr} \Psi^ {-\tr}[\int_{\delta_i}\omega_j] .
\end{equation}
Taking the determinant of this equality we can calculate $\det([\int_{\delta_i}\omega_j])$ up to sign:
\begin{equation}
 \label{inchie}
\det(\per)=\frac{12}{5^{10}}\frac{1}{(t_4-t_0^5)^2}.
\end{equation}
There is another effective way to calculate this determinant without
the sign ambiguity. For simplicity, we use the restricted parameters $t_4=1$ and $t_0=\psi$ and the notation $x_{ij}:=\int_{\delta_i}\omega_j$ as in the Introduction.
Proposition \ref{19aug2010} and  the equality (\ref{24aug10}) gives us 6 non trivial relations between $x_{ij}$'s:
\begin{eqnarray*}
0 &=& -\frac{25}{6}x_{12}x_{21}+\frac{25}{6}x_{11}x_{22}+\frac{5}{2}x_{22}x_{31}-\frac{5}{2}x_{21}x_{32}-\frac{5}{6}x_{12}x_{41}+\frac{5}{6}x_{11}x_{42}  \\
0 &=& -\frac{25}{6}x_{13}x_{21}+\frac{25}{6}x_{11}x_{23}+\frac{5}{2}x_{23}x_{31}-\frac{5}{2}x_{21}x_{33}-\frac{5}{6}x_{13}x_{41}+\frac{5}{6}x_{11}x_{43}\\
0 &=& -\frac{25}{6}x_{14}x_{21}+\frac{25}{6}x_{11}x_{24}+\frac{5}{2}x_{24}x_{31}-\frac{5}{2}x_{21}x_{34}-\frac{5}{6}x_{14}x_{41}+\frac{5}{6}x_{11}x_{44}-\frac{1}{625(\psi^5-1)}\\
0 &=& -\frac{25}{6}x_{13}x_{22}+\frac{25}{6}x_{12}x_{23}+\frac{5}{2}x_{23}x_{32}-\frac{5}{2}x_{22}x_{33}-\frac{5}{6}x_{13}x_{42}+\frac{5}{6}x_{12}x_{43}+\frac{1}{625(\psi^5-1)}\\
0 &=&-\frac{25}{6}x_{14}x_{22}+\frac{25}{6}x_{12}x_{24}+\frac{5}{2}x_{24}x_{32}-\frac{5}{2}x_{22}x_{34}-\frac{5}{6}x_{14}x_{42}+\frac{5}{6}x_{12}x_{44}-\frac{\psi^4}{125(\psi^5-1)^2}\\
0 &=& -\frac{25}{6}x_{14}x_{23}+\frac{25}{6}x_{13}x_{24}+\frac{5}{2}x_{24}x_{33}-\frac{5}{2}x_{23}x_{34}-\frac{5}{6}x_{14}x_{43}+\frac{5}{6}x_{13}x_{44}+\frac{\psi^3}{125(\psi^5-1)^2}.
\end{eqnarray*}
These equalities correspond to the entries $(1,2),(1,3),(1,4),(2,3),(2,4)$ and  $(3,4)$ of (\ref{24aug10}). In the ideal of $\Q(\psi)[x_{ij}, \ i,j=1,2,3,4]$
generated by the polynomials $f_{12},f_{13},f_{14},f_{23},f_{2,4},f_{34}$ in the right hand side of the above equalities the polynomial $\det([x_{ij}])$ is reduced to 
the right hand side of (\ref{inchie}). For instance, Singular 
check this immediately (see \cite{GPS01}).  Let $y_{ij}$ be indeterminate variables, $R=\C(\psi)[y_{ij}, i,j=1,2,3,4]$ and 
$$
I:=\{f\in R\mid f(x_{ij})=0\}.
$$
\begin{prop}
\label{27aug2010}
The ideal $I$ is generated by $f_{12},f_{13},f_{14},f_{23},f_{2,4},f_{34}$.
\end{prop}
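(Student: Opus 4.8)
The plan is to prove the two opposite inclusions between $J:=(f_{12},f_{13},f_{14},f_{23},f_{24},f_{34})$ and $I$ by a dimension count, after recognizing $V(J)$ as a torsor under a symplectic group. First note that $I$ is prime, being the kernel of the evaluation homomorphism $R\to\C(\psi)(x_{ij})$ into a field. The inclusion $J\subseteq I$ is exactly the content of the six displayed relations preceding the statement, i.e.\ of the matrix identity (\ref{24aug10}) together with Proposition \ref{19aug2010}: writing $Y=[y_{ij}]$ for the matrix of indeterminates and $B:=[\langle\omega_i,\omega_j\rangle]$ (with $t_0=\psi,\ t_4=1$, so that $B$ has entries in $\C(\psi)$), the six generators are precisely the independent upper-triangular entries of the antisymmetric matrix $Y^{\tr}\Psi^{-\tr}Y-B$, and (\ref{24aug10}) says these vanish at $y_{ij}=x_{ij}$. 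Thus
\[
V(J)=\{\,Y\mid Y^{\tr}\Psi^{-\tr}Y=B\,\},
\]
the set of matrices carrying the symplectic form $\Psi^{-\tr}$ to the form $B$.

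Next I would check that both antisymmetric forms are nondegenerate: the matrix $\Psi$ of (\ref{intersectionmatrix}) is invertible, and the Pfaffian of the matrix $B$ of Proposition \ref{19aug2010} equals $-\frac{1}{625^2}(t_4-t_0^5)^{-2}\neq0$, so $B$ is nondegenerate too. Consequently, over $\overline{\C(\psi)}$ the variety $V(J)$ is a torsor under the symplectic group $\mathrm{Sp}(\Psi^{-\tr})\cong\mathrm{Sp}_4$; it is nonempty because the period point $(x_{ij})$ lies on it. A torsor under a smooth connected group is smooth and geometrically integral of the same dimension, so $V(J)$ is geometrically irreducible of dimension $\dim\mathrm{Sp}_4=10$. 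Since $J$ is generated by $6$ elements and $\codim V(J)=16-10=6$, the generators form a complete intersection; the Jacobian of the six quadrics has maximal rank $6$ at every point of the smooth torsor, so the scheme they cut out is smooth, hence reduced. Being geometrically integral, $V(J)$ has prime ideal $J$, and $\dim R/J=10$.

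Next I would compute $\mathrm{trdeg}_{\C(\psi)}\C(\psi)(x_{ij})$. Since $\omega_j=\partial^{\,j-1}\omega_1/\partial t_0^{\,j-1}$, the rows of $[x_{ij}]$ are the four independent solutions $x_{i1}=\int_{\delta_i}\omega_1$ of the Picard–Fuchs equation (\ref{31aug2010}) together with their $\psi$-derivatives; hence $\C(\psi)(x_{ij})$ is exactly the Picard–Vessiot field of (\ref{31aug2010}), and its transcendence degree over $\C(\psi)$ equals the dimension of the differential Galois group $G$. Because the Gauss–Manin connection preserves the intersection form, $G\subseteq\mathrm{Sp}_4$, giving $\mathrm{trdeg}\le10$ (also visible from $J\subseteq I$). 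For the reverse inequality I would use that (\ref{31aug2010}) is Fuchsian, so by Schlesinger's density theorem $G$ is the Zariski closure of its monodromy group; as (\ref{31aug2010}) is the hypergeometric Picard–Fuchs equation of the quintic mirror, whose monodromy is Zariski dense in $\mathrm{Sp}_4$, this yields $G=\mathrm{Sp}_4$ and $\mathrm{trdeg}=10$. Hence $\dim R/I=16-\mathrm{trdeg}=10$.

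Finally, the inclusion $J\subseteq I$ of prime ideals induces a surjection $R/J\twoheadrightarrow R/I$ of finitely generated domains over $\C(\psi)$. If $I\neq J$, its kernel $I/J$ would be a nonzero prime of the domain $R/J$, and since $R$ is catenary this forces $\dim R/I<\dim R/J$, contradicting $\dim R/I=\dim R/J=10$. Therefore $I=J$. The main obstacle is the lower bound $\mathrm{trdeg}\ge10$, equivalently that the differential Galois group is all of $\mathrm{Sp}_4$: this rests on the Zariski-density of the quintic–mirror monodromy and cannot be extracted from the symplectic relations alone. A secondary technical point is upgrading the set-theoretic torsor description of $V(J)$ to the statement that the six quadrics generate a radical ideal, which is where the complete-intersection and smoothness argument is required.
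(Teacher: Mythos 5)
Your strategy coincides with the paper's: both arguments rest on showing that the differential Galois group $G$ of the Picard--Fuchs equation is all of $\mathrm{Sp}(4,\C)$, so that $\mathrm{trdeg}_{\C(\psi)}\C(\psi)(x_{ij})=\dim G=10$, and then comparing with the ten-dimensional locus cut out by the six quadrics. On the commutative-algebra side you are in fact more complete than the paper, whose proof stops at the transcendence-degree computation and leaves implicit that $V(J)$ is irreducible of dimension $10$ and that $J$ is prime; your torsor/complete-intersection/smoothness argument supplies exactly the missing half, and it is correct (every point of $V(J)$ has $\det Y\neq 0$ because both forms are nondegenerate, so the Jacobian criterion applies everywhere). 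The one place you diverge is at the crux: to get $G=\mathrm{Sp}(4,\C)$ you invoke Schlesinger density together with the bare assertion that the quintic-mirror monodromy is Zariski dense in $\mathrm{Sp}_4$, which you do not prove. The paper instead derives this from data already in hand: the monodromy around $z=0$ is the maximal (regular) unipotent matrix $M$ of (\ref{policiafederal2010}), it lies in the differential Galois group (no density theorem needed for this containment), and the Saxl--Seitz classification \cite{sax97} of closed subgroups of a simple algebraic group containing a regular unipotent element then forces $G=\mathrm{Sp}(4,\C)$. If you replace your unproved density assertion with this argument, or with an explicit citation (e.g.\ to Beukers--Heckman for the relevant hypergeometric group), your proof is complete and otherwise matches the paper's.
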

\begin{proof}
 Let $E$ be the differential field over $F=\C(\psi)$ generated by $x_{ij}$'s. Note that
the matrix $[x_{ij}]$ is the fundamental system of the linear differential equation:
$$
\frac{\partial}{\partial \psi}[x_{ij}]=[x_{ij}]B(\psi)^\tr, 
$$
where $B(\psi)$ is obtained from the matrix (\ref{25aug2010}) by putting $dt_0=1,\ dt_4=0,\ t_0=\psi,\ t_4=1$. 
The homology group $H_3(W_\psi,\Q)$ has a symplectic basis and hence 
the monodromy group of $W_\psi$ is a subgroup of ${\rm Sp}(4,\Z)$. Consequently, the differential Galois group  
$G(E/F)$  is an algebraic subgroup of ${\rm Sp}(4,\C)$ and it contains a maximal 
unipotent matrix 
which is the monodromy around $z=0$. 
By a result of Saxl and Seitz, see \cite{sax97}, we have $G(E/F)={\rm Sp}(4,\C)$. 
Therefore, $\dim G(E/F)=10$ which is the transcendental degree of the field $E$ over $F$ (see \cite{put03}).
\end{proof}

%
%
%
%
%
%
%
%
%

\section{A leaf of $\rm Ra$}
\label{12may2010}
The solutions of the the vector field $\rm Ra$ in the moduli space $T$ are the locus of parameters such that all the 
periods of $\omega$ are constant. We want to choose a solution of $\rm Ra$ and write it in an explicit form. We proceed as follows:

Let $\tilde\delta_i, \delta_i\ i=1,2,3,4$ be two basis of $H_3(W_{t},\Q)$ as in \S \ref{polyrel} and let $C_{4\times 1}=[c_1,c_2,c_3,c_4]^\tr$ be given by the 
equality 
$$
[\langle \tilde \delta_i,\tilde\delta_j\rangle] C=[1,0,0,0]^{\tr}.
$$
and so $C=[0,0,0,-\frac{6}{5}]^\tr$. 
We are interested on the loci $L$ of parameters $s\in T$ such that
\begin{equation}
\label{badbad}
\int_{\delta_i}\omega=c_i,\ i=1,2,3,4.
\end{equation}
We will write each coordinate of $s$ in terms of periods: first we note that, on this locus 
we have
$$
\int_{\delta_1}\omega_1=1 
$$
because
\begin{eqnarray*}
1 &=& \langle \omega_1,\omega\rangle=\sum_{i,j}\langle \tilde \delta_i,\tilde \delta_j\rangle \int_{\delta_i}\omega_1 \int_{\delta_j}\omega
\\ &=&
[\int_{\delta_1}\omega_1,\ldots, \int_{\delta_{4}}\omega_1][\langle \tilde \delta_i,\tilde \delta_j\rangle]C
=\int_{\delta_1}\omega_1.
 \end{eqnarray*}
By our choice $\omega_1$ does not depend on $t_1, t_2$ and $t_3$. Therefore, 
the locus of parameters $s$  in $T$ such that $\int_{\delta_1}\omega_1=1$ is given by
\begin{equation}
 \label{t_0t_4}
(s_0,s_4)=(t_0,t_4)\bullet \int_{\delta_1}=(t_0\int_{\delta_1}\omega_1,  t_4(\int_{\delta_1}\omega_1)^{5})
\end{equation}
with arbitrary $s_1, s_2,s_3$. This is because for $k=(\int_{\delta_1} {\omega_1})^{-1}$, we have $\int_{\delta_1}k\omega_1=1$ and 
under the identification $(t_0,t_{4})\mapsto (W_{t_0,t_{4}},\omega_1)$, the pair  $(t_0,t_{4})\bullet k^{-1}$ is mapped to 
$(W_{t_0,t_{4}},k\omega_1)$.
To find $s_1,s_2,s_3$ parameters we proceed as follows: we know that
 $\omega=s_1\omega_1+s_2\omega_2+s_3\omega_3+\frac{\omega_{4}}{\langle \omega_1,\omega_{4} \rangle}$. 
This together with (\ref{badbad}) and (\ref{22july10}) imply that
$$
[\int_{\delta_i}\omega_j]_{4\times 4}[s_1, s_2, s_3, 625(s_4-s_0^5)]^{\tr}=C
$$
which gives formulas for $s_1,s_2,s_3$ in terms of periods.  
Let us write all these in terms of the periods of the one parameter family $W_\psi$. Recall the notation $x_{ij}$ in the Introduction. We have
$s_0=\psi x_{11}$, $s_1=x_{11}^5$ and  
$$
\int_{\delta_i}\omega_j=x_{11}^{-j}x_{ij}.
$$ 
Note that in the above equality the cycle $\delta_i$ lives in $W_{\psi x_{11},x_{11}^5}$. 
 We restrict  $s_i$'s to $t_0=\psi,\ t_4=1$, we use the equality (\ref{inchie}) and we get:
\begin{eqnarray*}
s_k &=& 
-\frac{6}{5}\frac{(-1)^{4+k}\det [x_{11}^{-j}x_{ij}]_{i,j=1,2,3,4,\ i\not =4,\ j\not= k}}{\det [x_{11}^{-j}x_{ij}]} \\
&=& -\frac{6}{5}\frac{5^{10}}{12}(1-\psi^5)^2 (-1)^{4+k}x_{11}^k\det [x_{ij}]_{i,j=1,2,3,4,\ i\not =4,\ j\not= k} \  \ k=1,2,3,
\end{eqnarray*}
Modulo the ideal $I$ in \S\ref{polyrel} the expressions for $s_k$'s can be reduced to to the shorter expressions in the right hand side of the equalities in Theorem
\ref{main3}.  In the left hand side we have written $t_i$ instead of $s_i$. 
We also get the relation
$$
625x_{11}^5(1-\psi^5)= -\frac{6}{5}\frac{5^{10}}{12}(1-\psi^5)^2  x_{11}^4\det [x_{ij}]_{i,j=1,2,3} .
$$
The function $\psi\to s(\psi):=(s_0(\psi),s_1(\psi),\ldots,s_4(\psi))$ is tangent to the vector field $\rm Ra$ but it is not its solution. 
In order to get a solution, one has to make a change of variable in $\psi$.

\section{The parametrization}
\label{omidcreche}
Let $\tilde \omega_i,\ i=1,2,3,4$ be the basis of the de Rham cohomology of $W_{t}, \ t\in T$ constructed 
in Proposition \ref{24may2010}. 
We consider the period map:
$$
\per: T\to \Mat(4),\ t\mapsto [\int_{\delta_i}\tilde \omega_j]_{4\times 4},
$$
where  $\Mat(4)$ is the set of $4\times 4$ matrices. By our construction of $\tilde \omega_i$, its image
is of dimension $5$ and so it is an embedding in some open neighborhood $U$ of a point 
$p\in L$  in $T$ . We restrict  its inverse 
$s=(s_0,s_1,s_2,s_3,s_4)$ to  $\per(L)$,  where $L$ is defined in \S\ref{12may2010}. Note that a point in $\per(L)$ is of the form:
$$
P=
\begin{pmatrix}
 1& p_{12} & p_{13} &  0\\
 \tau & p_{22} & p_{23} & 0\\
 p_{31}& p_{32} & p_{33} & 0\\
 p_{41}& p_{42} & p_{43} & -\frac{6}{5}\\
\end{pmatrix}.
$$
We consider $s_0,s_1,s_2,s_3,s_{4}$ and all the quantities $p_{ij}$ as functions of $\tau$ 
and set $\dot a=\frac{\partial a}{\partial \tau}$. This is our derivation in (\ref{lovely}). Note that $\tau$ as a function in $\psi$ is 
given by:
$$
\tau=\frac{\int_{\delta_2}\Omega}{\int_{\delta_1}\Omega}.
$$   
We have  $\dot s(\tau)=x(\tau)\cdot {\rm Ra}(s(\tau))$ for some holomorphic function $x$ 
in $U\cap L$, because $\rm Ra$ 
is tangent to the locus $L$ and $s$ is a local parametrization of $L$. 
Let $A$ be the Gauss-Manin connection matrix of the family
$W_t, \ t\in T$ in the basis $\tilde \omega_i,\ i=1,2,3,4$. We have 
$d(\per)=\per\cdot  A^{\tr}$, from which it follows
$$
\begin{pmatrix}
 0& \dot p_{12} & \dot p_{13} & 0\\
 1& \dot p_{22} & \dot p_{23} & 0\\
 \dot p_{31}& \dot p_{32} & \dot p_{33}& 0\\
 \dot p_{41}& \dot p_{42} & \dot p_{43}& 0\\
\end{pmatrix}=
\begin{pmatrix}
 1& p_{12} & p_{13} &  0\\
 \tau& p_{22} & p_{23} & 0\\
 p_{31}& p_{32} & p_{33} & 0\\
 p_{41}& p_{42} & p_{43} & -\frac{6}{5}\\
\end{pmatrix}
\begin{pmatrix}
0& 0 & 0  &0\\
x& 0 & x\cdot b_2(s)   &0\\
0&x& x\cdot b_{3}(s)   &0\\
0&0& x\cdot b_4(s)&0\\
\end{pmatrix}.
$$
Here we have used the particular form of $A$ in Proposition \ref{24may2010}. 
The equalities corresponding to the entries 
$(1,i),\ i\geq 2$ together with the fact that $x\not =0, b_{4}(s)\not =0$ imply that $p_{12}=p_{13}=0$.
The equality for the entry $(2,1)$ implies that $x=\frac{1}{p_{22}}$. Using these,
we have
\begin{equation}
\label{24maio2010}
\begin{pmatrix}
 0& 0 & 0 & 0\\
 1& \dot p_{22} & \dot p_{23} & 0\\
 \dot p_{31}& \dot p_{32} & \dot p_{33} & 0\\
  \dot p_{41}& \dot p_{42} & \dot p_{43} & 0
\end{pmatrix}=
\begin{pmatrix}
 1& 0 & 0 & 0\\
 \tau & p_{22} & p_{23} & 0\\
 p_{31}& p_{32} &  p_{33} & 0\\
  p_{41}&  p_{42} &  p_{43} & -\frac{6}{5}
\end{pmatrix}
\begin{pmatrix}
0& 0 & 0& 0\\
\frac{1}{p_{22}}&0&\frac{b_{2}(s)}{p_{22}}&0\\
0&\frac{1}{p_{22}}& \frac{b_{3}(s)}{p_{22}}&0\\
0&0&
 \frac{b_{4}(s)}{p_{22}}&0
\end{pmatrix}.
\end{equation}

\section{Periods}
\label{5aug2010}
Four linearly independent solutions of (\ref{18fev2009}) are given by 
$\psi_0,\psi_1,\psi_2,\psi_3$, where
\begin{equation}
\label{zafeman}
\sum_{i=0}^3\psi_i(\tilde z)\epsilon^i+O(\epsilon^4)=\sum_{n=0}^\infty \frac{(1+5\epsilon)(2+5\epsilon)\cdots(5n+5\epsilon)}{((1+\epsilon)(2+\epsilon)\cdots (n+\epsilon))^5}\tilde z^{n+\epsilon}
,\ \ \tilde z=\frac{z}{5^5}
\end{equation}
see for instance \cite{kon95}. In fact, there are  four topological cycles 
$\delta_1,\delta_2,\delta_3,\delta_4\in 
H_3(W_{z},\Q)$ such that 
$$
\int_{\delta_{i}}\eta=\frac{(2\pi i)^{4-i}}{5^4}(i-1)!\psi_{i-1}.
$$
Performing the monodromy of (\ref{zafeman}) around $z=0$, we get the same expression 
multiplied with $e^{2\pi i\epsilon}$. 
Therefore, the monodromy $\tilde \psi_i$ of $\psi_i$ is given according to the equalities:
$$
\tilde \psi_0=\psi_0,\ \tilde\psi_1=(2\pi i)\psi_0+\psi_1,\ 
\tilde\psi_2=\frac{(2\pi i)^2}{2!}\psi_0+(2\pi i)\psi_1+\psi_2,
$$
$$
\tilde\psi_3=\frac{(2\pi i)^3}{3!}\psi_0+\frac{(2\pi i)^2}{2!}\psi_1+ (2\pi i)\psi_2+\psi_3.
$$
This implies that the topological monodromy, which acts on $H_3(W_{1,z},\Q)$, in the basis 
$\delta_i,\ i=1,2,3,4$ is given by 
\begin{equation}
\label{policiafederal2010}
M=\begin{pmatrix}
1&0&0&0\\
1& 1&0&0\\
 1&2&1&0\\
1&3&3&1
\end{pmatrix}.
\end{equation}
Further, the intersection form in this basis is $\Psi$ in (\ref{intersectionmatrix}), and the monodromy around the other singularity is
$$
\begin{pmatrix}
1&-\frac{25}{6}&0&-\frac{5}{6}\\
0&1&    0&0\\
0&0&    1&0\\  
0&0&    0&1  
\end{pmatrix}.
$$
see for instance \cite{vanvan},  page 5. In fact in \cite{vanvan} the authors have considered 
the basis $C[\delta_1,\delta_2,\delta_3,\delta_4]^\tr$, where
$$
C=\begin{pmatrix}
   0&  \frac{25}{6}&0&  \frac{5}{6}\\
\frac{25}{6}&0&   \frac{5}{2}&0\\  
0&   5&   0&  0\\  
5&   0&   0&  0  
  \end{pmatrix}
$$
Note that in the mentioned reference when the authors say that with respect to a basis $\delta_1,\delta_2,\delta_3,\delta_4$ of a vector space,
a linear map is given by the matrix $T$ then the action of the linear map on $\delta_i$ is the $i$-th coordinate
of $[\delta_1,\delta_2,\delta_3,\delta_4]T$ and not $T[\delta_1,\delta_2,\delta_3,\delta_4]^\tr$.
Define
$$
Z=\begin{pmatrix}
   1&0&0&0\\
\tau&1&0&0\\
\tau^2&2\tau&2&0\\
\tau^3&3\tau^2&6\tau&6
\end{pmatrix}.
$$
Note that
$$
D=Z^{-1}\dot Z= 
\begin{pmatrix}
 0&0&0&0\\
1&0&0&0\\
0&1&0&0\\
0&0&1&0 
\end{pmatrix}
$$
and under the monodromy $M$, $\tau$ goes to $\tau+1$ and $Z$ goes to $MZ$. Therefore
$$
Q=Z^{-1}P=
\begin{pmatrix}
1&0&0&0\\
0&p_{22}&p_{23}&0\\
\frac{1}{2}(p_{31}-\tau^2)& \frac{1}{2}p_{32}-\tau p_{22} & \frac{1}{2}p_{33}-\tau p_{23}& 0\\
\frac{1}{3}\tau ^3-\frac{1}{2}\tau p_{31}+\frac{1}{6}p_{41}&
\frac{1}{2}\tau ^2p_{22}-\frac{1}{2}\tau p_{32}+\frac{1}{6}p_{42}&
\frac{1}{2}\tau ^2p_{23}-\frac{1}{2}\tau p_{33}+\frac{1}{6}p_{43})& -\frac{1}{5}
\end{pmatrix}
$$
is invariant under the monodromy around $0$. The differential equation of $P$ is given in
(\ref{24maio2010}) which we write it in the form
$\dot P=\frac{1}{p_{22}}P\cdot A({\rm Ra})^\tr$.
From this we calculate the differential equation of $Q$;
$$
\dot Q=-Z^{-1}\dot ZZ^{-1}P+Z^{-1}\dot P=-DQ+\frac{1}{q_{22}}QA(Ra)^{\tr}=
$$
$$
\frac{1}{q_{22}}
\begin{pmatrix}
 0&0&0&0\\
0&q_{23}&
q_{22}b_{2}+q_{23}b_{3}&0\\
q_{32}& -q_{22}^2+q_{33} &
q_{32}b_{2}+q_{33}b_{3}-q_{22}q_{23} & 0 \\
-q_{22}q_{31}+q_{42}& 
-q_{22}q_{32}+q_{43}&
q_{42}b_{2}+q_{43}b_{3}-\frac{1}{5}b_{4}-q_{22}q_{33}& 
0
\end{pmatrix}.
$$
Let us use the new notation $s_5=q_{22}$ and $s_6=q_{23}$. The first five lines of our 
differential equation (\ref{lovely}) is  just $\dot s=\frac{1}{s_5}{\rm Ra}(s)$ and the next two 
lines correspond to the equalities of $(2,2)$ and $(2,3)$ entries of 
the above matrices. Note that in (\ref{lovely}) we have used the notation $t_i$ instead of $s_i$.

\section{Calculating $q$-expansions}
\label{qexpension}
All the quantities $s_i$ are  invariant under the monodromy $M$ around $z=0$. 
This implies that they are invariant under the transformation $\tau\to \tau+1$. 
Therefore, all $s_i$'s can be written in terms of 
the new variable $q=e^{2\pi i \tau}$.
In order to calculate all these $q$-expansions, it is enough to restrict to the case 
$t_0=1, t_1=t_2=t_3=0, t_4=z$. 
We want to write
$$
s_0=\int_{\delta_1}\eta,\ s_4=z(\int_{\delta_1}\eta)^5
$$
in terms of $q$. Calculating $\psi_0$ and $\psi_1$ from the formula (\ref{zafeman}) we get:
$$
\psi_0=\sum_{m=0}^{\infty}\frac{(5m)!}{(m!)^5}\tilde z^m
$$
$$
\psi_1=\ln(\tilde z) \psi_0(\tilde z)+5\tilde \psi_1(\tilde z),\ \ \tilde\psi_1:=\sum_{m=1}^{\infty}\frac{(5m)!}{(m!)^5}(\sum_{k=m+1}^{5m}\frac{1}{k})\tilde z^m
$$
and so 
$$
q=e^{2\pi i\frac{\int_{\delta_2}\eta}{\int_{\delta_1}\eta}}=\tilde z e^{5\frac{\tilde \psi_1(\tilde z)}{\psi_0(\tilde z)}}.
$$
By comparing  few 
coefficients of $\tilde z^i$ and we get 
\begin{equation}
\label{tanhayi0}
s_0=\int_{\delta_1}\eta=\frac{1}{5}(\frac{2\pi i}{5})^3\psi_0=
\frac{1}{5}(\frac{2\pi i}{5})^3(1+5!q+21000q^2+\cdots)
\end{equation}
\begin{equation}
\label{tanhayi1}
s_4=z(\int_{\delta_1}\eta)^5=5^5(\frac{1}{5}(\frac{2\pi i}{5})^3)^5  \tilde z \psi_0^5= (\frac{2\pi i}{5})^{15}(0+q-170q^2+\cdots).
\end{equation}
In the differential equation (\ref{lovely}), we consider the weights
\begin{equation}
\label{7sep2010}
\deg(t_i)=3(i+1),\ i=0,1,\ldots,4, \ \deg(t_5)=11,\  \deg(t_6)=23. 
\end{equation}
In this way in its right hand side we have homogeneous rational functions of degree $4,7,10,13,16,12,24$ which is compatible with the left hand side if we assume that the derivation increases the degree by one. We have 
$\frac{\partial}{\partial \tau}=(\frac{2\pi i}{5})5q\frac{\partial}{\partial q}$ and so $(\frac{2\pi i}{5})^{-\deg(t_i)}s_i,\ i=0,1\ldots,6$ is the solution presented in the Introduction. 
The initial values (\ref{22july2010}) in the Introduction are taken from the equalities (\ref{tanhayi0}) and (\ref{tanhayi1}). In the literature, see for instance \cite{kon95, pan97}, we find also the equalities:
$$
q_{31}=\frac{1}{2}(p_{31}-\tau ^2)= \frac{1}{2}(\frac{\int_{\delta_3}\eta}{ \int_{\delta_1}\eta }- (\frac{\int_{\delta_2}\eta}{ \int_{\delta_1}\eta })^2)=
\frac{1}{(2\pi i)^2}(\frac{\psi_2}{\psi_0}-\frac{1}{2}(\frac{\psi_1}{\psi_0})^2)=
$$
$$
\frac{1}{(2\pi i)^2}\frac{1}{5}( \sum_{n=1}^\infty (\sum_{d|n}n_d d^3)\frac{q^n}{n^2}),
$$
$$
q_{14}=\frac{1}{3}\tau ^3-\frac{1}{2}\tau p_{31}+\frac{1}{6}p_{41}=
\frac{1}{(2\pi i)^3}( \frac{1}{3}(\frac{\psi_1}{\psi_0})^3-\frac{\psi_1}{\psi_0}\frac{\psi_2}{\psi_0}+\frac{\psi_3}{\psi_0})=
\frac{2}{5}\frac{1}{(2\pi i)^3}(( \sum_{n=1}^\infty (\sum_{d|n}n_d d^3)\frac{q^n}{n^3}) ), 
$$
where $n_d$ are as explained in the Introduction.

\section{Proof of Theorem \ref{main3}}
The proof of the equalities for $t_0,t_1,t_3,t_4$ is done in \S \ref{12may2010}. In \S \ref{basis} we have calculated 
$\tilde \omega_2,\ \tilde \omega_3$ in terms of $\omega_2$ and $\omega_3$. In \S \ref{omidcreche}  and \S \ref{5aug2010} we have defined
$$
s_5=p_{22}=q_{22}=\int_{\delta_2}\tilde \omega_2,\ \  s_6=p_{23}=q_{23}=\int_{\delta_2}\tilde \omega_3.
$$
Using $\int_{\delta_i}\omega_j=x_{11}^{-j}x_{ij}$ we get the expressions for $s_5,s_6$ in Theorem \ref{main3}.
Note that for simplicity in Theorem \ref{main3} we have again used the notation $t_i$ instead of $s_i a^{-\deg(t_i)}$, where $a=\frac{2\pi i}{5}$ and $\deg(t_i)$ is defined 
in (\ref{7sep2010}).

\section{Proof of Theorem \ref{29.03.10}}
The Yukawa coupling $k_{\tau\tau\tau}$ is a quantity attached to the family of Calabi-Yau 
varieties $W_{1,z}$. It can be written in terms of periods:
$$
k_{\tau\tau\tau}=
\frac{-5^{-4}a^6}{(z\frac{\partial\tau}{\partial z})^3(z-1)(\int_{\delta_1}\eta)^2},
$$
where $\tau=\frac{\int_{\delta_2}\eta}{\int_{\delta_1}\eta}$ and $a=\frac{2\pi i}{5}$, 
see for instance \cite{mo92} page 258. In \cite{can91} the authors have calculated the $q$-expansion
of the Yukawa coupling and they have reached to spectacular predictions presented in Introduction.
Let us calculate the Yukawa coupling in terms of our auxiliary quantities $s_i$. We use the notation $t_i=s_i a^{-\deg(t_i)}$.
\begin{eqnarray*}
 k_{\tau\tau\tau} &=& \frac{-5^{-4}a^6 }{(\frac{t_4}{t_0^5})^3\left (\frac{\partial\left (\frac{t_4}{t_0^5}\right )}{\partial \tau}\right )^{-3}(\frac{t_4}{t_0^5}-1)(a^3t_0)^2}
=
\frac{-5^{-4}\left (\dot{\overbrace{\frac{t_4}{t_0^5}}}\right)^3   }{
(\frac{t_4}{t_0^5})^3(\frac{t_4}{t_0^5}-1)t_0^2
}
=
\frac{ -5^{-4}(t_0\dot t_4-5\dot t_0 t_4)^3 t_0^{12}}
{t_4^3(t_4-t_0)} \\
&=&
\frac{-5^{-4} (t_0 (5t_0^4t_4+\frac{1}{625}t_3t_4)-5(\frac{6}{5}t_0^5+\frac{1}{3125}t_0t_3-\frac{1}{5}t_4) t_4)^3 t_0^{12}}
{t_5^3t_4^3(t_4-t_0)} \\
&=&
\frac{-5^{-4}(t_4-t_0^5)^2}{t_5^3}
\end{eqnarray*}
Theorem \ref{29.03.10} is proved. 


\section{Proof of Theorem \ref{29march10}}
First, we note that if there is a polynomial relation with coefficients in $\C$
between $t_i, i=0,1,\ldots,6$ (as power series  in $q=e^{2\pi i \tau}$ and hence as functions in $\tau$) then the 
same 
is true if we change the variable $\tau$ by some function in another variable.
In particular, we put $\tau=\frac{x_{21}}{x_{11}}$ and obtain $t_i$'s in terms of periods. Now, it is enough to prove that the period expressions in Theorem \ref{main3} 
 are algebraically independent over $\C$. Using Proposition \ref{27aug2010}, it is enough to prove that the variety induced by the ideal 
$\tilde I=\langle t_i-k_i,\ i=0,1,\ldots,6\rangle+I\subset k[y_{ij},i,j=1,2,3,4]$  is of dimension 
$16-6-7=3$. Here $k_i$'s are arbitrary parameters, $I$ is the ideal in \S\ref{polyrel}, $\k=\C(k_i,i=0,1,\ldots,6)$ and in the expressions of $t_i$ we have written $y_{ij}$ instead of $x_{ij}$.
This can be done by any software in commutative algebra (see for instance \cite{GPS01}).




\section{Where is Calabi-Yau monster?}
The parameter $j=z^{-1}=\frac{t_0^5}{t_4}$ classifies the Calabi-Yau varieties of type (\ref{shahva}), that is, each such Calabi-Yau 
variety is represented exactly by one value of $j$ and two such Calabi-Yau varieties are isomorphic if and only if the corresponding $j$ values are equal. 
This is similar to the case of elliptic curves which are classified by the $j$-function (see \S \ref{modularforms}). 
We have calculated also the $q$-expansions of $j$:
$$
3125\cdot j=\frac{1}{q}+770
+421375q
+274007500q^2
+236982309375q^3
+251719793608904q^4
$$
$$
+304471121626588125q^5
+401431674714748714500q^6
+562487442070502650877500q^7
$$
$$
+824572505123979141773850000q^8
+1013472859153384775272872409691q^9+O(q^{10})
$$
The coefficient $3125$ is chosen in such a way that all the coefficients of $q^i, i\leq 9$ in 
$3125\cdot j$ are integer and all together are relatively prime. Note that the moduli parameter
$j$ in our case has two cusps $\infty$ and $1$, that is, for these values of $j$ we have
singular fibers. Our $q$-expansion is written around the cusp $\infty$.

All the beautiful history behind the interpretation of the coefficients of the $j$-function of elliptic curves, monster group, 
monstrous moonshine conjecture and Borcherds proof, may indicate us another fascinating mathematics behind the $q$-expansion  of 
the $j$-function of the varieties (\ref{shahva}).

\section{A conjecture}
\label{conjecture}
 We have calculated the first eleven coefficients of 
\begin{equation}
\label{mardesic}
\frac{1}{24}t_0,\frac{-1}{750}t_1,\frac{-1}{50}t_2,\frac{-1}{5}t_3,-t_4,25t_5,15625t_6
\end{equation}
in the differential equation (\ref{lovely}).

$\frac{1}{24}t_0=\frac{ 1}{120}+q+175q^2+
   117625q^3+
   111784375q^4+
   126958105626^5+
   160715581780591q^6+
   218874699262438350q^7+
   314179164066791400375q^8+
   469234842365062637809375q^9+
   $\\$
   722875994952367766020759550q^{10}+O(q^{11})$
\vspace{.2in}

$\frac{-1}{750}t_1=\frac{1}{30}+
   3q+
   930q^2+
   566375q^3+
   526770000q^4+
   592132503858q^5+
   745012928951258q^6+
   1010500474677945510q^7+
   1446287695614437271000q^8+
   2155340222852696651995625q^{9}+$\\
   $3314709711759484241245738380q^{10}+O(q^{11})$
\vspace{.2in}

 $\frac{-1}{50}t_2=\frac{7}{10}+
   107q+
   50390q^2+
   29007975q^3+
   26014527500q^4+
   28743493632402q^5+$\\$
   35790559257796542q^6+
   48205845153859479030q^7+
   68647453506412345755300q^8+$\\$
   101912303698877609329100625q^9+
   156263153250677320910779548340q^{10}+O(q^{11})$
\vspace{.2in}

$\frac{-1}{5}t_3=\frac{6}{5}+
   71q+
   188330q^2+
   100324275q^3+
   86097977000q^4+
   93009679497426q^5+$\\$
   114266677893238146q^6+
   152527823430305901510q^7+
   215812408812642816943200q^8+$\\$
   318839967257572460805706125q^9+
   487033977592346076373921829980q^{10}+O(q^{11})$
   
\vspace{.2in}

$-t_4=
   0
   -1q^1+
   170q^2+
   41475q^3+
   32183000q^4+
   32678171250q^5+
   38612049889554q^6+
   50189141795178390q^7+
   69660564113425804800q^8+
   101431587084669781525125q^{9}$\\$
   153189681044166218779637500q^{10}+O(q^{11})$
   \vspace{.2in}

   $25t_5=\frac{-1}{125}+
   15q+
   938q^2+
   587805q^3+
   525369650q^4+
   577718296190q^5+
   716515428667010q^6+
   962043316960737646q^7+
   1366589803139580122090q^8+
   2024744003173189934886225q^9+$\\$
   3099476777084481347731347688q^{10}+O(q^{11})$
   \vspace{.2in}

$15625t_6=0
   -15q+
   26249q^2+
   3512835q^3+
   2527019900q^4+
   2381349669050q^5+$\\$
   2699403828169815q^6+
   3414337117855753978q^7+
   4647615139046603293280q^8+$\\$
   6668975996587015549602975q^9+
   9957519516309695103093241870q^{10}+O(q^{11})
   $
\\   
We have also calculated  the Yukawa coupling 
$\frac{-(t_4-t_0^5)^2}{625t_5^3}$. The numbers $n_s$ in the Introduction are given by:
\\
$ 5,2875,609250,317206375,242467530000,229305888887625,248249742118022000,$\\$295091050570845659250,375632160937476603550000, 503840510416985243645106250,$\\$ 704288164978454686113488249750
$\\
Based on these calculations we may conjecture:
\begin{conj}
\label{moussu}
All $q$-expansions of 
$$
\frac{1}{24}t_0-\frac{ 1}{120},\ 
\frac{-1}{750}t_1-\frac{1}{30},\ 
\frac{-1}{50}t_2-\frac{7}{10},\ 
\frac{-1}{5}t_3-\frac{6}{5},\ 
-t_4,\ 
25t_5+\frac{1}{125},\ 
15625t_6
$$
have positive integer coefficients.
\end{conj}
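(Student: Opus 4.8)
The plan is to separate the two assertions---integrality and positivity of the coefficients---since they are of very different natures, and to attack integrality first. For this I would work directly with the recursion encoded in the differential equation (\ref{lovely}). Writing $t_i=\sum_n t_{i,n}q^n$ and substituting, each $t_{i,n}$ is determined as a polynomial in the lower coefficients $t_{j,m}$, $m<n$, divided by a power of $t_{5,0}$ together with the explicit denominators $5,625,3125,1953125$ occurring in (\ref{lovely}), all of which are powers of $5$ times small integers built from the numerators $6,24,72,3$. On the nontrivial branch $t_{5,0}=-\frac{1}{3125}$, dividing by $t_5$ injects further powers of $5$. The normalizing constants in (\ref{mardesic}), namely $\frac{1}{24},\frac{-1}{750},\frac{-1}{50},\frac{-1}{5},-1,25,15625$, are chosen precisely to absorb the primes $2,3,5$ that can appear, so the natural statement is that the rescaled $t_{i,n}$ lie in $\Z$, proved by strong induction on $n$. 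The verification reduces to $p$-adic bookkeeping at $p=2,3,5$: one must check that the valuations of the terms produced by the recursion never fall below what the normalizations absorb.

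As an independent route, and to supply the base cases, I would exploit the closed formulas of Theorem \ref{main3} together with the explicit period $\psi_0=\sum_m\frac{(5m)!}{(m!)^5}\tilde z^m$, whose coefficients $\frac{(5m)!}{(m!)^5}$ are integers. Since by Theorem \ref{main3} each $t_i$ is a rational expression in the periods $x_{ij}$, and these are assembled from $\psi_0,\psi_1,\psi_2,\psi_3$ and the mirror map $q=\tilde z\,e^{5\tilde\psi_1/\psi_0}$, the integrality of the $q$-expansions should follow from the integrality of $\psi_0$, the integrality of the inverse mirror map (a known property of the quintic mirror), and Proposition \ref{27aug2010}, which identifies all algebraic relations among the $x_{ij}$ so that no hidden denominators survive the reduction.

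Positivity is where the real difficulty lies, and I expect it to be the main obstacle. The one positivity fact already in hand is geometric: by Theorem \ref{29.03.10} the Yukawa coupling $\frac{-(t_4-t_0^5)^2}{625 t_5^3}$ has the Lambert expansion $5+\sum_{d\geq 1} n_d d^3\frac{q^d}{1-q^d}$, whose $n_d$ are conjecturally, and for small $d$ provably, positive integers counting rational curves. My strategy would be to seek an analogous divisor-sum representation for each normalized function in (\ref{mardesic}), exhibiting its coefficients as integer combinations of the $n_d$ or of related genus-zero invariants; the computations of \S\ref{qexpension}, where $q_{31}$ and $q_{14}$ already appear as $\frac{1}{5}\sum_n(\sum_{d\mid n}n_d d^3)\frac{q^n}{n^k}$, show that such expressions arise naturally by differentiating and combining the period functions, and one would then read off positivity from that of the $n_d$. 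Here the mixed signs visible in the raw data (for instance the $-15\,q$ term of $15625\,t_6$) warn that the correct positive objects are these recombined divisor sums rather than the $t_i$ themselves.

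The hardest point, which the above cannot fully dispose of, is that the positivity and integrality of the $n_d$ are themselves the content of the Gopakumar--Vafa conjecture, established only in low degree. Consequently the most one can hope to obtain unconditionally by these methods is integrality and positivity up to the range in which the $n_d$ are controlled, with the general statement reducing to, and inheriting the difficulty of, Gopakumar--Vafa. To prove the conjecture outright, a genuinely new ingredient would be needed---most plausibly a direct enumerative interpretation of each $t_i$ as a generating function for some nonnegative geometric quantity---and locating such an interpretation is, to my mind, the crux of the problem.
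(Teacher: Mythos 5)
First, a point of orientation: Conjecture \ref{moussu} is stated in the paper as an open conjecture. The paper offers no proof --- only numerical verification of the coefficients of $q^i$, $i\le 50$, and the explicit remark that known encyclopedias do not recognize these integer sequences. So there is no argument in the paper to compare yours against, and your own closing paragraph concedes that your plan does not prove the statement either. What you have written is a research programme, not a proof, and it should be judged as such.

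Within that programme there is one concrete error worth flagging. You claim that integrality of the rescaled $t_{i,n}$ follows by strong induction from ``$p$-adic bookkeeping at $p=2,3,5$.'' It does not, because the derivation in (\ref{lovely}) is $\dot t=5q\frac{\partial t}{\partial q}$: extracting the coefficient of $q^n$ from the left-hand side produces $5n\,t_{i,n}$, so the recursion determines $t_{i,n}$ only after division by (a multiple of) $n$. Every prime $p\le n$ therefore enters the denominators, not just $2$, $3$ and $5$, and the fixed normalizing constants in (\ref{mardesic}) cannot absorb them. Proving that the numerator is always divisible by $n$ is exactly the kind of statement that, for the closely related instanton numbers $n_d$, constitutes the Gopakumar--Vafa integrality problem, of which the paper cites only partial results (Kontsevich--Schwarz--Vologodsky, Krattenthaler--Rivoal). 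Your second route has a similar non sequitur: integrality of the inverse mirror map controls $\psi_0$, $\tilde\psi_1$ and the change of variable $\tilde z\leftrightarrow q$, but the expressions in Theorem \ref{main3} also involve $\psi_2$, $\psi_3$ and Wronskian-type combinations ($t_5$, $t_6$), and Proposition \ref{27aug2010} is a statement about the ideal of algebraic relations over $\C(\psi)$ --- it says nothing about denominators of $q$-expansion coefficients. On positivity your diagnosis is essentially right: Theorem \ref{29.03.10} ties one combination of the $t_i$ to the Yukawa coupling $5+\sum_d n_dd^3\frac{q^d}{1-q^d}$, so any positivity proof along these lines inherits the open positivity of the $n_d$; and your observation about mixed signs in the displayed low-order data is a genuine tension with the literal statement of the conjecture that deserves to be checked against the author's normalization. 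In short: the approach identifies the right difficulties but closes none of them, and the conjecture remains open after your argument exactly as it was before it.
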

We have verified the conjecture  for the coefficients of $q^i, i\leq 50$ (see the author's web page).
The rational numbers  which appear  in (\ref{mardesic}) are chosen in such a way that the coefficients 
$t_{i,n}, n=1,2,\ldots,10$ become positive integers and for each fixed $i$ they are relatively prime. 
Writing the series $t_i$ as Lambert series $a_0+\sum_{d=1}^\infty a_d\frac{q^d}{1-q^d}$ does not help  for understanding the structure of $t_{i,n}$. It is not possible to factor out some potential of $d$ from $a_d$'s for each $t_i$.  One should probably take out a polynomial in $q$ from $t_i$ and then try to understand the nature of the sequences.   

I gave  the conjecture (\ref{moussu}) in the case of Ramanujan differential equation (\ref{raman}) to my students in a number theory course 
(the first initial values $t_{1,0}=1, \ t_{1,1}=-24$ are enough to determine all coefficients uniquely). They were not aware about Eisenstein series. They calculated some first coefficients  and then using the on-line encyclopedia of integer sequences they guessed the general formula (\ref{eisenstein}).
The mentioned encyclopedia does not recognize the integer sequences  of 
$t_0,t_1,\ldots,t_6$. 
This support the fact that the general formula for $t_i$'s or any interpretation of them  is not yet known.

\section{Moduli space, III}
\label{moduli3}
In this section we introduce moduli interpretation for $t_5$ and $t_6$.
Let $\tilde {\rm Ra}$ be the vector field in $\C^7$ corresponding to (\ref{lovely}) and let $\tilde \omega_i,\ i=1,2,3,4$ be 
the differential forms calculated in Proposition \ref{24may2010}. Consider $t_i,\ i=0,1,2,\ldots,6$ as unknown parameters. We define a new basis $\hat\omega_i, i=1,2,3,4$ of $H^3_\dR(W_{t_0,t_4})$:
$$
\hat\omega_1=\tilde\omega_1,\ \hat\omega_2=\frac{1}{t_5}\tilde\omega_2, \ \hat\omega_3=\frac{5^7}{(t_4-t_0^5)^2}(-t_6\tilde \omega_2+t_5\tilde\omega_3),\ \hat\omega_4=\tilde \omega_4.
$$
The intersection form  in the basis $\hat\omega_i,\ i=1,2,3,4$ is a constant matrix and in fact it is:
\begin{equation}
\label{31aug10}
\begin{pmatrix}
 0&0&0&1\\
0&0&1&0\\
0&-1&0&0\\
-1&0&0&0
\end{pmatrix}
\end{equation}
The Gauss-Manin connection composed with $\tilde {\rm Ra}$ has also the form: 
$$
\nabla_{\tilde {\rm Ra}}=
\begin{pmatrix}
 0&1&0&0\\
0&0& \frac{(t_4-t_0^5)^2}{5^7t_5^3}&0\\
0&0&0&-1\\
0&0&0&0
\end{pmatrix}  
$$
It is interesting that the Yukawa coupling appears as the only non constant term in the above matrix.
Let $X$  be the moduli of pairs $(W,\{\alpha_1,\alpha_2,\alpha_3,\alpha_4\})$, where $W$ is a Calabi-Yau variety as before, 
$\alpha_i\in F^{4-i}\backslash F^{5-i}$, $F^i\subset H_\dR^3(W) $  is the $i$-th piece of the Hodge filtration, $\alpha_i$'s form a basis of $H_\dR^3(W)$ and the intersection form in 
$\alpha_i$'s is given by the matrix (\ref{31aug10}). We have the isomorphism
$$
\{t\in \C^7\mid t_5t_4(t_4-t_0^5)\not =0\}\cong X
$$ 
$$
t\mapsto (W_{t_0,t_{4}}, \{\hat\omega_1,\hat \omega_2,\hat \omega_3,\hat \omega_3\} )
$$
which gives the full moduli interpretation of all $t_i$'s.  


\def\cprime{$'$} \def\cprime{$'$} \def\cprime{$'$}

\end{document}